\newtheorem{theorem}{Theorem}
\newtheorem{lemma}[theorem]{Lemma}
\newtheorem{corollary}[theorem]{Corollary}
\newtheorem{proposition}[theorem]{Proposition}
\theoremstyle{definition}
\newtheorem{definition}[theorem]{Definition}
\newtheorem{remark}[theorem]{Remark}
\newtheorem{example}[theorem]{Example}
\newtheorem*{question}{Question}
\newtheorem{maintheorem}{Theorem}
\newtheorem{maincorollary}[maintheorem]{Corollary}
\newtheorem{conjecture}[maintheorem]{Conjecture}
\newcommand{\g}{\Gamma}
\newcommand{\R}{\mathbb R}
\newcommand{\Z}{\mathbb Z}
\DeclareMathOperator{\ev}{ev}
\DeclareMathOperator{\Ext}{Ext}
\DeclareMathOperator{\Hom}{Hom}
\DeclareMathOperator{\id}{id}
\DeclareMathOperator{\rel}{rel}
\DeclareMathOperator{\Wh}{Wh}
\newcommand{\ol}[1]{{\overline{#1}}}
\newcommand{\wt}[1]{{\widetilde{#1}}}
\newcommand{\cC}{\mathcal C}
\newcommand{\cD}{\mathcal D}
\newcommand{\cM}{\mathcal M}
\newcommand{\cS}{\mathcal S}
\newcommand{\bfK}{\mathbf K}
\newcommand{\bfL}{\mathbf L}
\newcommand{\px}{\pi_1X} 
\newcommand{\zpx}{\Z\pi_1X} 
\newcommand{\zp}{\Z\pi}
\begin{document}. 

\title{Aspherical manifolds with boundary}

\author{James F. Davis \and J. A. Hillman }
\address{{Department of Mathematics, Indiana University,}
\newline
{Bloomington, IN 47405 USA} 
\newline
{School of Mathematics and Statistics, University of Sydney,}
\newline
Sydney,  NSW 2006, Australia }

\email{jfdavis@iu.edu, jonathan.hillman@sydney.edu.au}

\begin{abstract}
We undertake a systematic investigation of compact aspherical manifolds with boundary; motivated by the plethora of examples in the bounded case and by the beauty of the theory in the closed case.  
Our main theorems give a homological criterion for when a closed manifold, together with maps from the fundamental groups of its components to a fixed group, can be realized as the boundary of a compact aspherical manifold.   This is done in two steps: we first produce a Poincar\'e pair and then apply surgery theory to obtain a manifold.   We illustrate this in the case of abelian fundamental group.   The results of this paper will be applied in a sequel where we classify compact aspherical 4-manifolds with elementary amenable fundamental group.
\end{abstract}

\keywords{aspherical, boundary, Borel Conjecture}

\subjclass{57R67, 57P10, 20J05, 57K10}

\maketitle

This topic of this paper is the homeomorphism classification of  compact aspherical topological  manifolds with boundary.   
An {\em aspherical space} is a space whose universal cover is contractible.  Here are two obvious comments: the boundary of an aspherical manifold need not be aspherical and the fundamental group of an aspherical manifold is torsion-free.  

The {\em  Borel Conjecture for a compact aspherical manifold $M$} states that any homotopy equivalence $H : W \to M$ from a compact manifold $W$ which restricts to a homeomorphism $h : \partial W \xrightarrow{\cong} \partial M$ on the boundary is homotopic to a homeomorphism rel $\partial W$, i.e. the homotopy is fixed on  $\partial W$.

\begin{maintheorem} \label{BC}
Let $M$ be a compact aspherical $n$-manifold with boundary, with fundamental group $\pi$, and with dimension $n$.   
\begin{enumerate}
 \item If $n = 4$ and $\pi$ is elementary amenable, then the Borel Conjecture holds for $M$.
 \item If $n \geq 5$ and the Farrell-Jones Conjecture (FJC) holds in $K$- and $L$-theory for $\pi$, then the Borel Conjecture holds for $M$.
\end{enumerate}
\end{maintheorem}

Case (2) is well-known to the experts (although perhaps not well-documented when the manifold has a boundary), but is new in case (1).   Case (1) is due to  the fact that topological surgery in dimension 4 works for elementary amenable fundamental groups, and the validity of the Farrell-Jones Conjecture for the groups in Theorem \ref{BC}, which in turn depends on our classification of fundamental groups of compact aspherical 4-manifolds with elementary amenable fundamental group from \cite{DH2}.  
The focus on elementary amenable groups is motivated by the fact that the class of fundamental groups of compact aspherical 4-manifolds for which the Disc Embedding Conjecture \cite{BKKPR} is known is contained in the class of  elementary amenable groups  (see the introduction to \cite{DH2}).

In practice,  one is often given compact aspherical manifolds $W$ and $M$ with isomorphic  fundamental groups  and homeomorphic boundaries and asks if the manifolds themselves are homeomorphic.   This question is studied using the Borel Conjecture and elementary obstruction theory and is discussed in Subsection \ref{subsection_hb}.   An example where the answer is no is given in Example \ref{nonhomeomorphic}.

A space $X$ is {\em finitely dominated} if it has the homotopy type of a CW-complex and if it is the homotopy retract of a finite CW-complex, i.e., there is a finite complex $F$ and there are maps $i: X \to F$ and $r : F \to X$ so that $r \circ i \simeq \id_X$.  In particular a finite CW-complex is finitely dominated.

We formulate the following conjecture which is the main focus of this paper.   It is an existence variant of the Borel Conjecture.

%

\begin{conjecture} \label{EC}
Let $(X,Y)$ be a pair of spaces where $X$ be a finitely dominated aspherical space with fundamental group $\pi$ and $Y$   is a closed nonempty $(n-1)$-dimensional manifold. If $H^i(X,Y;\Z\pi)$ vanishes for $i \not = n$ and is infinite cyclic (as an abelian group) for $i = n$, then there is a compact manifold $M$ and a commutative square
$$
\begin{CD}
\partial M @>\cong>> Y\\
@VVV @VVV \\
M @>\sim>> X
\end{CD}
$$
so that the upper horizontal map is a homeomorphism and the lower horizontal map is a homotopy equivalence.   In particular, $Y$ is the boundary of a compact aspherical manifold with fundamental group $\pi$.

\end{conjecture}

Recall that if $K(\pi,1)$ is a finite complex and if $H^i(\pi; \Z\pi)$ vanishes for $i \not = n$ and is infinite cyclic when $i = n$, then $K(\pi,1)$ is a Poincar\'e complex of dimension $n$ (see \cite{JW}, \cite{Bieri}, or \cite{Brown}).  

The following theorem is an analogue of this and is the first step towards the resolution of Conjecture \ref{EC}.

\begin{maintheorem} \label{PP}
Suppose $\pi$ is a group and $K(\pi,1)$ is finitely dominated.   Let $Y$ be an $(n-1)$-dimensional Poincar\'e complex with a map $c : Y \to K(\pi,1)$.     If $H^i(K(\pi,1),Y; \Z\pi)$ vanishes for $i \not = n$ and is infinite cyclic when $i = n$, then $(K(\pi,1), Y)$ is a Poincar\'e pair of dimension $n$.  
 \end{maintheorem}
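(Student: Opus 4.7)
The plan is to verify the chain-level criterion for $(X, Y) := (K(\pi, 1), Y)$ to be an $n$-dimensional Poincar\'e pair (following Wall's approach to Poincar\'e complexes). First replace $c$ by a cellular inclusion via the mapping cylinder, obtaining a finite CW pair. Let $\wt X \to X$ be the universal cover and $\wt Y \subset \wt X$ the preimage of $Y$; then $C_* := C_*(\wt X, \wt Y)$ is a bounded chain complex of finitely generated free left $\Z\pi$-modules. The dual $C^* := \Hom_{\Z\pi}(C_*, \Z\pi)$ computes $H^*(\pi, Y; \Z\pi)$, and by hypothesis this is $\Z$ concentrated in degree $n$. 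The $\Z\pi$-module structure on $H^n$ is recorded by an orientation character $w : \pi \to \{\pm 1\}$, yielding the module $\Z^w$. Reindexing, $P_k := C^{n-k}$ is a finite free $\Z\pi$-resolution of $\Z^w$, and a generator of $H^n(\pi, Y; \Z\pi)$ canonically determines a fundamental class $[X, Y] \in H_n(X, Y; \Z^w)$.

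The key step is to show that cap product with $[X, Y]$ induces a chain homotopy equivalence of $\Z\pi$-chain complexes
\[
\cap[X, Y] \colon C^{n-*}(\wt X; \Z\pi) \longrightarrow C_*(\wt X, \wt Y; \Z\pi^w).
\]
Both sides are bounded complexes of f.g.\ free $\Z\pi$-modules, so it suffices to prove the cap product is a quasi-isomorphism; any such map between perfect $\Z\pi$-complexes is automatically a chain homotopy equivalence. On cohomology this unfolds into the Poincar\'e--Lefschetz isomorphisms $H^k(\pi; \Z\pi) \cong H_{n-k}(X, Y; \Z\pi^w)$, which in degree zero reduce to the hypothesis together with the non-degeneracy of $[X, Y]$, and which extend to all other degrees via the long exact sequence of the pair. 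Here one uses that $H_j(\pi; \Z\pi^w) = 0$ for $j > 0$ (by flatness of $\Z\pi$) and the assumed Poincar\'e duality of $Y$ to propagate the isomorphism through the sequence.

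Once the main chain equivalence is established, the companion equivalence between $C^{n-*}(\wt X, \wt Y; \Z\pi)$ and $C_*(\wt X; \Z\pi^w)$ follows via the five lemma applied to the long exact sequences of the pair. It remains to verify that $\partial[X, Y] \in H_{n-1}(Y; \Z^w|_Y)$ is a fundamental class for $Y$; this is forced by naturality of cap product and the fact that $Y$ already carries a Poincar\'e duality, and it also supplies the compatibility between $w$ and the orientation character of $Y$. The main obstacle throughout is realizing the abstract algebraic duality implicit in the hypothesis as a geometric cap product: the fundamental class must be chosen so that the induced cap product matches the canonical chain equivalence between $C_*$ and the $\Z\pi$-dual of the resolution $P_* \to \Z^w$. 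This is a standard but delicate maneuver in Poincar\'e-complex theory, and it is where the bulk of the real work sits.
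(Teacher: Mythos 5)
Your overall plan---pass to the relative cellular chain complex, use the hypothesis to see that the dual complex is (co)homologically concentrated, produce a fundamental class, and show a cap-product chain map is an equivalence---is the right shape, but the two central steps are not as routine as you present them, and the particular cap product you pick makes them harder than they need to be.

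First, the reindexing claim "$P_k:=C^{n-k}$ is a finite free $\Z\pi$-resolution of $\Z^w$" is not automatic. $C_*(\wt X,\wt Y)$ is supported in $[0,D]$ where $D=\dim K(\pi,1)$, and nothing in the hypotheses forces $D\le n$; so $C^{n-*}$ can have terms in negative degrees. What the hypothesis gives you is that its \emph{homology} vanishes below degree $0$ (and is $\Z$ in degree $0$). One must then argue that such a type-FP complex is chain homotopy equivalent to one supported in $[0,n]$, and moreover one with the degree-$n$ term free of rank one. The paper spends a paragraph on exactly this normalization; your proposal silently assumes it. Relatedly, the existence of $[X,Y]\in H_n(X,Y;\Z^w)$ is not "canonical": the paper proves $H_n(\pi,Y;\ol\Z)\cong\Z$ via Lemma~\ref{be73lemma}, which is a genuine biduality computation (the natural map $\ol\Z\otimes_{\Z\pi}A\to\Hom_{\Z\pi}(A^\dagger,\Z)$ is an isomorphism for f.g.\ projective $A$, applied to the top differential).

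Second, and more seriously, you chose the cap product $C^{n-*}(\wt X;\Z\pi)\to C_*(\wt X,\wt Y;\Z\pi^w)$, i.e.\ from cohomology of $X$ to homology of the pair. Neither side is a resolution of $\Z$: $H^*(\pi;\Z\pi)$ is not controlled by the hypotheses, and $C_*(\wt X,\wt Y)$ has homology in many degrees. Your justification---that the map is an isomorphism "in degree zero by the hypothesis plus non-degeneracy of $[X,Y]$" and then "propagates through the long exact sequence of the pair"---is not an argument: the LES of the pair intertwines $H_*(X)$, $H_*(Y)$, $H_*(X,Y)$ but does not produce duality isomorphisms by itself, and the asserted degree-zero identification is not correct in general ($H^0(\pi;\Z\pi)=0$ when $\pi$ is infinite). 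The paper instead considers the \emph{other} cap product, $\ol{C^{n-*}(\pi,Y;\Z\pi)}\to C_*(\pi;\Z\pi)$. After the normalization above, \emph{both} sides are bounded complexes of f.g.\ free $\Z\pi$-modules with homology $\Z$ concentrated in degree $0$, i.e.\ free resolutions of $\Z$. The cap product is an isomorphism on $H_0$, so the fundamental lemma of homological algebra forces it to be a chain homotopy equivalence---no LES bootstrapping required. The remaining isomorphisms then follow formally (the paper cites Remark~1.5 of \cite{KQS}), which is also where the five-lemma-type deductions you sketch at the end legitimately enter. So the route you chose can in principle be repaired, but the repair amounts to switching to the paper's map, and until that switch is made the central step of your proof has a genuine gap.
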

 
 In contrast to \cite{BE}, we assume neither that $Y$ is aspherical nor that $\pi_1(c)$ is injective.

A Poincar\'e pair $(X,Y)$ with $Y$  a closed manifold can be {\em  realized rel $Y$} if there is a commutative square 
$$
\begin{CD}
\partial M @>\cong>> Y\\
@VVV @VVV \\
M @>\sim>> X
\end{CD}
$$
where $M$ is a compact manifold, the upper horizontal map is a homeomorphism and the lower horizontal map is a homotopy equivalence. 

\begin{maintheorem} \label{ET}
Let $(X,Y)$ be a Poincar\'e pair of dimension $n$ with $X$ aspherical  with fundamental group $\pi$ and with $Y$ a nonempty closed manifold.
\begin{enumerate}
\item If $n \leq 2$, then $(X,Y)$ can be realized rel $Y$.
 \item If $n = 4$ and $\pi$ is elementary amenable, then $(X,Y)$ can be realized rel $Y$.
 \item If $n \geq 5$ and the Farrell-Jones Conjecture (FJC) holds in $K$- and $L$-theory for $\pi$, then $(X,Y)$ can be realized rel $Y$.
\end{enumerate}
\end{maintheorem}

The hypothesis that $Y$ is a nonempty closed manifold  in Conjecture \ref{EC} is crucial for the proof of Theorem \ref{ET}(3).   In Lemma \ref{E_false_PP}, we give an example of a Poincar\'e pair $(X,Y)$ with $X$ aspherical which does not have the homotopy type of a compact manifold pair.

The analogue of Conjecture \ref{EC} for empty $Y$ is a famous problem of Wall \cite[page 391]{Wall79} ``Is every Poincar\'e duality group $\Gamma$ the fundamental group of a closed $K(\Gamma,1)$ manifold?"  Michael Davis \cite[Theorem C]{Davis98} gave a negative answer by constructing Poincar\'e duality groups which are not finitely presented.   There are no negative answers known to the modified question: Is every finitely presented Poincar\'e duality group $\Gamma$ the fundamental group of a closed $K(\Gamma,1)$ manifold?   Geometric group theory \cite{Davis00} has produced fascinating examples, but there is no systematic way of attacking this question, which is why it is probably best left as a question and not a conjecture.

Theorems \ref{PP} and \ref{ET} show that the existence question for $K(\pi,1)$ manifolds with boundary is essentially homological.  We work out some special cases, for example, classifying homotopy tori ($\pi= \Z^k$) and homotopy figure eights ($\pi= F(2)$).

\begin{maintheorem} \label{duality}
 Let $(X,Y)$ be a  CW-pair with $X$ aspherical and finitely dominated, $\pi = \pi_1X$ a $k$-duality group (i.e.~$H^i(X;\Z\pi) = 0$ for $i \not = k$), and $Y$ a nonempty Poincar\'e complex of dimension $n-1$.  Let $D = H^k(\pi;\Z\pi)$ (this right $\Z\pi$-module is called the dualizing module).  Let $\ol X$ and $\ol Y$ be the induced $\pi$-covers of $X$ and $Y$.   Then $(X,Y)$ is a Poincar\'e pair if and only if there exists a homomorphism $w : \pi \to \{\pm 1\}$ which extends $w_1Y$ in the sense that $w_1Y = w \circ \pi_1(Y \hookrightarrow X)$ and  \begin{enumerate}
 \item $H_i\ol Y $ vanishes for $i \not = 0,n-1-k$; and 
 \item  $\ker (H_{n-1-k} \ol Y  \to H_{n-1-k} \ol X) \cong \ol D$ as left $\Z\pi$-modules, where we twist by $w: \pi \to \{\pm 1\}$.  
\end{enumerate}
\end{maintheorem}

After specializing to  the case where $\pi$ is finitely generated free abelian, we obtain the following corollary.

\begin{maincorollary} \label{abelian}
\begin{enumerate}
\item The Borel Conjecture holds for compact aspherical manifolds (possibly with boundary) with fundamental group $\Z^k$.
 \item Conjecture \ref{EC} holds when $\pi = \Z^k$.
 \item If $M$ is a compact aspherical $n$-manifold with fundamental group $\Z^k$, then $H_*(\partial \wt M) \cong H_*(S^{n-k-1})$.
 \item If $N$ is a closed $(n-1)$-manifold, $\ol N \to N$ is a regular $\Z^k$-cover, and $H_*(\ol N) \cong H_*(S^{n-k-1})$, then $N$ is the boundary of a compact aspherical $n$-manifold with fundamental group $\Z^k$ and induced $\pi$-cover $\ol N \to N$.
\end{enumerate}
\end{maincorollary}

Section \ref{definitions} reviews the definitions of Poincar\'e complexes,  Poincar\'e pairs, and Poincar\'e duality groups.     Section \ref{pdgb_section} is pure algebraic topology; it introduces the theory of Poincar\'e duality groups with boundary and includes the proof of Theorem \ref{PP}.    Section \ref{uniqueness} deals with the  Borel Conjecture for compact aspherical manifolds with boundary and gives the proof of Theorem \ref{BC}.   Section \ref{existence}  deals with the realization question for Poincar\'e pairs $(X,Y)$ where $X$ is aspherical and $Y$ is a closed manifold and gives the proof of Theorem \ref{ET}.  Section \ref{examples} gives examples and applications and gives the proof of Theorem \ref{abelian}.

\medskip
\noindent{\bf Acknowledgment.}
This collaboration began at the conference on
 {\it Topology of Manifolds : Interactions between high and low dimensions\/} 
 held at Creswick, VIC.
The authors would like to thank the 
 MATRIX Institute and the  the National Science Foundation for their support.  
 JFD would like to thank the Simons Foundation for its support under  the Simons Collaboration Grant 713226.  The authors would like to thank Shmuel Weinberger for useful conversations.

\section{Poincar\'e complexes, Poincar\'e pairs, and Poincar\'e duality groups} \label{definitions}

This section  reviews standard definitions and sets notation.   Path-connected spaces will assumed to be based, although this is only for expositional convenience, so that we can work with the fundamental group rather than the fundamental groupoid.   The action of the fundamental group on the universal cover is assumed to be a left action.   
  If $B$ is a  $\Z\pi_1X$-module, define {\em (co)homology with twisted coefficients} to be the abelian groups 
\begin{align*}
H_i(X;B) &= H_i(B \otimes_{\zpx} S_\bullet \wt X)  \\
H^i(X;B) &= H^i(\Hom_{\zpx}(S_\bullet \wt X, B))
\end{align*}
where in homology, $B$ is a right module and in cohomology, $B$ is a left module, and $S_\bullet \wt X$ is the singular chain complex of the universal cover of $X$.

An {\em orientation character} is a homomorphism $w: \px \to \Z^{\times} = \{\pm 1 \}$.  Define an involution (an antiautomorphism of order 2) on $\Z\pi_1X$ by $\ol{\sum a_g g} = \sum a_g w(g) g^{-1}$. If $B$ is a left (or right) $\zpx$-module, define $\ol B$ to be the right (or left) $\zpx$-module whose underlying abelian group  is $B$ with $\zpx$-action given by $br:= \ol r b  $ (or $rb:= b \ol r$).  If $B$ is a $(\Z\pi_1X,\Z\pi_1X)$-bimodule, then so is $\ol B$ with $rbr' :=  \ol {r'} b \ol r$.

The notions of (co)homology with twisted coefficients and orientation character can be extended to spaces which are not path-connected by choosing a base point in each path component.    If $X$ has path components $\{X_\alpha\}$, then a $\zpx$-module $B = \{B_\alpha\}$ means a $\zpx_\alpha$-module $B_\alpha$ for each $\alpha$ and an orientation character $w = \{w_\alpha\}$ means a orientation character for each component.   Then define $H_i(X;B) := \oplus H_i(X_\alpha; B_\alpha)$ and $H^i(X;B) := \prod H^i(X_\alpha; B_\alpha)$.

Poincar\'e complexes and pairs are due to Wall \cite{Wall_P}, \cite{Wall_S}; a recent account is given in \cite{KQS}.   We follow the conventions of \cite{Hi20}.

\begin{definition}
 An {\em $n$-dimensional Poincar\'e complex} (a $PD_n$-complex) $(X,w)$ is a space $X$ with an orientation character $w$ so that
 
\begin{itemize}
 \item $X$ is finitely dominated and
 \item there exists  $[X] \in H_n(X; \ol \Z)$ ``the fundamental class" so that for all $i$,
 $$
 \cap [X] : \ol{H^i(X; \zpx)} \xrightarrow{\cong} H_{n-i}(X; \ol{\zpx}) 
 $$
 is an isomorphism of left $\Z\pi_1X$-modules.
\end{itemize}
\end{definition}

These two conditions can be replaced by 
\begin{itemize}
 \item $X$ has the homotopy type of a CW-complex, 
 \item $\pi_1X$ is finitely presented, and 
 \item there exists  $[X] \in H_n(X; \ol \Z)$ so that for any $i$ and for any left $\Z\pi_1X$-module $B$
 $$
 \cap [X] : \ol{H^i(X; B)} \xrightarrow{\cong} H_{n-i}(X; \ol{B})
 $$
 is an isomorphism of abelian groups.
\end{itemize}
To see why the top two bullet points imply the bottom three, one needs Lemma 1.1 of \cite{Wall_P}.  To see why the bottom three bullet points imply the top two, one needs Corollary 2 of \cite{Browder} or the Corollary on page 135 of \cite{Brown75}.

For a $(\Z\pi_1X,\Z\pi_1X)$-bimodule $B$, 
 $$
 \cap [X] : \ol{H^i(X; B)} \xrightarrow{\cong} H_{n-i}(X; \ol{B}) 
 $$
is an isomorphism of  left $\Z\pi_1X$-modules.

The map $\Z\pi_1X \to \ol{\Z\pi_1X}, \quad \sum a_g g \mapsto \sum a_g w(g) g^{-1}$ is an isomorphism of  $(\Z\pi_1X,\Z\pi_1X)$-bimodules and thus
$$
\ol{H^i(X; \zpx)} \xrightarrow{\cong} H_{n-i}(X; \zpx), 
$$
as left $\Z\pi_1X$-modules.   This is often  convenient to use.

If $w$ is trivial on the kernel of a group homomorphism $\pi_1X \to G$, one also sees that 
$$
\ol{H^i(X; \Z G)} \xrightarrow{\cong} H_{n-i}(X; \Z G). 
$$
as left $\Z G$-modules.

Note that a closed topological manifold is a Poincar\'e complex; this is one reason we did not assume that $X$ itself is a CW-complex.    For $X$ path-connected and finitely dominated, if $\wt K_0(\Z \pi_1 X) = 0$, then $X$ is homotopy equivalent to a finite complex.    In this paper, we are only interested in the case where $\pi_1X$ is torsion-free, and then  $\wt K_0(\Z \pi_1 X) $ is conjectured to vanish.  

We next turn to the notion of a Poincar\'e pair.  This could be a pair of spaces $(X,Y)$, but we wish to allow for the greater generality of a map $f : Y \to X$, in which case we will denote $H_*(M_f,Y)$ and $H^*(M_f,Y)$ by $H_*(X,Y)$ and $H^*(X,Y)$ and identify $H_*M_f$ and $H^*M_f$ with $H_*X$ and $H^*X$.  Here $M_f$ denotes the mapping cylinder of $f$.

\begin{definition}
 An {\em $n$-dimensional Poincar\'e pair} (a $PD_n$-pair) $(f: Y \to X,w)$ is a map of spaces $f$ and    an orientation character $w$ on $X$ so that
 
\begin{itemize}
 \item $X$ and $Y$ are finitely dominated and
 \item there exists  $[X] \in H_n(X,Y; \ol\Z)$ ``the fundamental class" so that for all $i$, the following maps are isomorphisms:
 \begin{align*}
  \cap [X] & :  \ol{H^i(X; \zpx)} \xrightarrow{\cong} H_{n-i}(X,Y; \ol\zpx) \\
  \cap [X] & :  \ol{H^i(X,Y; \zpx)} \xrightarrow{\cong} H_{n-i}(X; \ol\zpx).
 \end{align*}
 
 \item $(Y, w|_{\pi_1Y})$ is an $(n-1)$-dimensional Poincar\'e complex with fundamental class $\partial [X]$ where $\partial : H_{n}(X,Y;\ol \Z) \to H_{n-1}(Y;\ol \Z)$ is the boundary map in homology exact sequence of the pair.

\end{itemize}
\end{definition}

As above, there is an alternate definition of a Poincar\'e pair, replacing the two conditions by fundamental groups being finitely presented and requiring the duality isomorphisms with arbitrary twisted coefficients.

A compact topological manifold with boundary is a Poincar\'e pair.   

For any left $\Z\pi_1X$-module $B$, 
 \begin{align*}
  \cap [X] & :  {H^i(X; B)} \xrightarrow{\cong} H_{n-i}(X,Y; \ol B) \\
  \cap [X] & :  {H^i(X,Y; B)} \xrightarrow{\cong} H_{n-i}(X; \ol B),
 \end{align*}
 with corresponding statements for bimodules.

A group $\pi$ {\em has type FP} if the trivial $\Z\pi$-module $\Z$ has a finite length resolution 
by finitely generated projective (left) $\Z\pi$-modules.  A $K(\pi,1)$ is finitely dominated if and only if $\pi$ is finitely presented and has type FP.

A group $\pi$ is a $PD_n$-group (a {\em Poincar\'e duality group
of dimension $n$}) if $\pi$ has type FP and $H^i(\pi; \Z\pi)$ is infinite cyclic as an abelian group for $i = n$ and is zero otherwise.
Since $\Z\pi$ is a $(\Z\pi,\Z\pi)$-bimodule, 
$H^n(\pi; \Z\pi)$ is a right $\Z\pi$-module.   
The corresponding homomorphism $w: \pi \to \Z^\times$ is called
the {\em orientation character} of the Poincar\'e duality group $\pi$.
It is not a priori clear, but is nonetheless true, that $H_n(\pi; \ol \Z)$ is infinite cyclic and that the cap product with a generator gives an isomorphism
$$
\ol{H^i(\pi ; \zp)} \xrightarrow{\cong} H_{n-i}(\pi ; \ol\zp).
$$
Thus a finitely presented group $\pi$ is a $PD_n$-group if and only if  $K(\pi,1)$ is a Poincar\'e complex.   If $\rho$ is a subgroup of finite index in a torsion-free group $\pi$, then $\rho$ is a $PD_n$-group if and only if $\pi$ is.

Poincar\'e duality groups were introduced independently by Johnson and Wall \cite{JW} and Bieri \cite{Bieri}; 
a basic reference is the book of Brown \cite{Brown}.

  A closed aspherical $n$-manifold has fundamental group which is finitely presented and $PD_n$. The converse question  is still open:   is every finitely presented $PD_n$-group the fundamental group of a closed, aspherical $n$-manifold?

A closed Riemannian manifold with non-negative curvature is aspherical.   A torsion-free virtually polycyclic group is the fundamental group of a closed aspherical manifold \cite{AJ}.  Some remarks on aspherical 4-manifolds are given in \cite{DH2}; our focus there is on the case of manifolds with boundary.

\section{Poincar\'e duality groups with boundary}   \label{pdgb_section}

The purpose of this section is to introduce the analogue of a Poincar\'e duality group associated to a compact aspherical manifold with boundary.    The first thing to realize is that the boundary of such a manifold need not be aspherical  $(D^n,S^{n-1})$, or even connected $(T^{n-1} \times I, T^{n-1} \sqcup T^{n-1})$.   The second thing is that for a compact aspherical $n$ manifold $M$ with fundamental group $\pi$,
$$
H^i(M,\partial M; \Z\pi) \cong H_{n-i}(M; \Z\pi) \cong H_{n-i}(\text{pt}).
$$

\begin{definition} \label{pdgb}
A {\em Poincar\'e duality group with boundary of dimension $n$} is a triple $(\pi,Y,c)$ consisting of a group $\pi$ of type FP, a Poincar\'e complex $Y$ of dimension $n-1$, and a homomorphism $c: \pi_1Y \to \pi$, such
 that $H^i(\pi,Y;\zp)$ is infinite cyclic as an abelian group for $i = n$ and is zero otherwise.   
\end{definition}

Here $c$ can be interpreted as a homomorphism of  groupoids, or equivalently as a collection of group homomorphisms $c = \{c_\alpha : \pi_1(Y_\alpha,y_\alpha)\to \pi\}$ where $y_\alpha \in Y_\alpha$ is a choice of base point for each path-component $Y_\alpha$ of $Y$.   Here $H^i(\pi,Y;\Z\pi)$ is interpreted as $H^i(M(c),Y;\zp)$ where $M(c)$ is the mapping cylinder of a map $Y \to K(\pi,1)$ realizing $c$.

Since $H^n(\pi,Y;\Z \pi)$ is a $\Z\pi$-module, a Poincar\'e duality group with boundary determines an orientation character $w : \pi \to \{\pm 1\}$.

The theorem below is a slight generalization of Theorem \ref{PP}.   (It is a slight generalization since we only assume that $\pi$ is type FP, and not that $K(\pi,1)$ is finitely dominated.)
\begin{theorem} \label{PDn_theorem}
 Let $(\pi,Y,c)$ be a Poincar\'e duality group with boundary of dimension $n$. Then 
\begin{enumerate}
\item $H_n(\pi,Y;\ol \Z) \cong \Z$.   A choice of generator is called a {\em fundamental class} and is denoted by $[\pi,Y,c]$.
\item  Capping with a fundamental class gives isomorphisms
 \begin{align*}
  \cap [\pi,Y,c] & :  \ol{H^i(\pi; \Z\pi)} \xrightarrow{\cong} H_{n-i}(\pi,Y; \ol {\Z\pi}) \\
  \cap [\pi,Y,c] & :  \ol{H^i(\pi,Y; \Z\pi)} \xrightarrow{\cong} H_{n-i}(\pi;\ol {\Z\pi}).
 \end{align*}
\end{enumerate}
Thus if $\pi$ is finitely presented, $(M(c),Y)$ is a $n$-dimensional Poincar\'e pair.
 \end{theorem}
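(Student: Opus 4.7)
The plan is to reduce the theorem to a statement about bounded chain complexes of finitely generated projective $\Z\pi$-modules, and to derive everything from the chain-level duality afforded by the cohomological hypothesis. Form $M := M(c)$, the mapping cylinder, with universal cover $\wt{M}$ and preimage $\wt{Y} \subset \wt{M}$. Since $K(\pi,1)$ is a finite complex and $Y$ is finitely dominated, the relative cellular chain complex $C_* := C_*(\wt{M}, \wt{Y})$ is chain homotopy equivalent to a bounded complex of finitely generated projective (in fact free) left $\Z\pi$-modules concentrated in degrees $0, \ldots, n$. By hypothesis the dual complex $C^* := \Hom_{\Z\pi}(C_*, \Z\pi)$ has cohomology zero outside degree $n$ and $\Z$ in degree $n$; the induced left $\Z\pi$-action on that $\Z$ is by $\pm 1$, giving the orientation character $w$, so $H^n(C^*) = \ol{\Z}$.

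For part (1), reindex by $P_j := C^{n-j}$, so that $P_* \to \ol{\Z}$ is a length-$n$ projective resolution over $\Z\pi$. Using the transpose isomorphism $\ol{\Z} \otimes_{\Z\pi} F \cong \Hom_{\Z\pi}(F^{\ast}, \ol{\Z})$ valid for every finitely generated projective $F$ (with $F^{\ast} := \Hom_{\Z\pi}(F, \Z\pi)$), there is a natural chain isomorphism $\ol{\Z} \otimes_{\Z\pi} C_* \cong \Hom_{\Z\pi}(P_{n-*}, \ol{\Z})$, and hence
$$
H_n(X, Y; \ol{\Z}) \;\cong\; H^0(\Hom_{\Z\pi}(P_*, \ol{\Z})) \;=\; \Hom_{\Z\pi}(\ol{\Z}, \ol{\Z}) \;=\; \Z.
$$
Choose a generator $[\pi]$.

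For part (2), represent $[\pi]$ by a relative $n$-cycle and form the chain-level cap products. This gives chain maps
$$
\Phi : \Hom_{\Z\pi}(C_*(\wt{M}), \Z\pi)[-n] \to C_*, \qquad \Psi : \Hom_{\Z\pi}(C_*, \Z\pi)[-n] \to C_*(\wt{M}),
$$
between bounded complexes of finitely generated projective $\Z\pi$-modules (noting that $C_*(\wt{M}) \to \Z$ is a free resolution since $\wt{M}$ is contractible). Unwinding the construction, $\Psi$ is chain homotopic to the canonical comparison map between projective resolutions of $\Z$ and $\ol{\Z}$ arising from the resolution $P_* \to \ol{\Z}$ of the previous step, hence is a chain equivalence by Whitehead's theorem for bounded complexes of projectives; this yields the second cap product isomorphism. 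The first follows by a parallel argument using $\Phi$. The third isomorphism ($\cap \partial[\pi]$ on $Y$) then comes from the five-lemma applied to the ladder of long exact sequences of $(X,Y)$ in which the vertical arrows are the various cap products, with the two outer columns already known to be isomorphisms.

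The main obstacle is the identification of $\Phi$ and $\Psi$ with the canonical comparison maps between projective resolutions of $\ol{\Z}$. This is the standard algebraic content of Poincar\'e duality underlying Wall's work, but it requires careful bookkeeping of left versus right $\Z\pi$-module structures, the anti-involution $g \mapsto g^{-1}$, and the orientation character $w$, and is the place where sign and convention errors most readily enter.
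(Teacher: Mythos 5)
Your overall strategy — reduce to a bounded complex of finitely generated projectives, identify $H_n(\pi,Y;\ol\Z)$ via the transpose isomorphism $\ol\Z\otimes_{\Z\pi}F\cong\Hom_{\Z\pi}(F^*,\ol\Z)$, and then promote the cap product to a chain equivalence by the comparison theorem — is the same shape as the paper's argument, and your use of the transpose isomorphism on the whole complex is a clean packaging of what the paper does with Lemma \ref{be73lemma}. But there are genuine gaps at the two places where the hypothesis actually does work.

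The first gap is your opening sentence: it is \emph{not} automatic that $C_*(\wt M,\wt Y)$ has the chain homotopy type of a complex of finitely generated projectives concentrated in degrees $[0,n]$. The finite $K(\pi,1)$ may have dimension greater than $n$, and the finitely dominated $Y$ has no dimension bound at all, so a priori the complex is only bounded somewhere. The reduction to support $[0,n]$ is exactly where the vanishing $H^i(\pi,Y;\Z\pi)=0$ for $i>n$ enters; the paper devotes several paragraphs to it (fold the complex from below using acyclicity in negative degrees, dualize, apply the cohomological vanishing, dualize back, and further normalize so that $C_n$ is free of rank one). Without that normalization, your reindexed $P_*=C^{n-*}$ need not vanish in negative degrees, and the identification $H_n(\ol\Z\otimes C_*)\cong H^0(\Hom(P_*,\ol\Z))=\Hom_{\Z\pi}(\ol\Z,\ol\Z)$ does not follow.

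The second gap is in part (2). You correctly identify the crux, but then hand it off: to invoke the comparison theorem for $\Psi$ you must know the chain map $\cap[\pi]$ induces an isomorphism, not merely some map, between the two copies of $\Z$ in degree zero — this is what the paper's reduction to $C_n$ free of rank one is for, and it is where the definition of $[\pi]$ as a \emph{generator} of $H_n(\pi,Y;\ol\Z)$ has to be fed back in. Moreover, "the first follows by a parallel argument using $\Phi$" does not work as written: $C_*(\wt M,\wt Y)$ has $H_0=0$ (the boundary is nonempty), so it is not a projective resolution of anything, and the target of $\Phi$ cannot be compared to a resolution directly. The correct move is to obtain $\Phi$ from $\Psi$ by dualizing and applying double-duality for finitely generated projectives, or to establish only the single isomorphism $\cap[\pi]:\ol{H^i(\pi,Y;\Z\pi)}\to H_{n-i}(\pi;\Z\pi)$ and deduce the other two (as the paper does by citing Remark 1.5 of \cite{KQS}); your five-lemma step for the third isomorphism is fine once the first two are in hand, but you need a correct source for $\Phi$.
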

 
 For a ring $R$ and a left $R$-module $M$, then $M^* = \Hom_R(M,R)$ is a  right $R$-module.  In fact $M \mapsto M^*$ defines a contravariant functor from left $R$-modules to right $R$-modules.   If $C_{\bullet}$ is a chain complex of left $R$-modules, let $C^\bullet = \Hom_R(C_\bullet,R)$ be the dual cochain complex of right $R$-modules.
 
 For a group $\pi$ with orientation character $w$,  the group ring $\Z\pi$ has an involution, so we can do better.   If $M$ is a left module, then the {\em conjugate dual} $L^\dagger = \overline{L^*}= \overline{\Hom_{\mathbb{Z}\pi}(L,\mathbb{Z}\pi)}$ is a left module.  If $L$ is finitely generated projective, then so is $L^\dagger$, and furthermore the module map $L \to L^{\dagger \dagger}$ given by $l \mapsto (\phi \mapsto \ol{\phi(l)})$ is an isomorphism.  Thus the conjugate dual restricted to the category of finitely generated projective left modules is a contravariant endofunctor whose square is naturally isomorphic to the identity.

\begin{lemma} \label{evaluation lemma}
\begin{enumerate}

\item Let $C_\bullet$ be a left $\Z\pi$-chain complex.  Let $\ol{C^\bullet}$ be the conjugate dual cochain complex.   Let $B$ be a left $\Z\pi$-module.   Then for all $i$,  there is an evaluation map 
$$
\ev: H_i(\ol B \otimes_{\Z\pi}  C_\bullet)  \to \Hom_{\Z\pi}(H^i\ol{C^\bullet},B ).
$$

\item Let $C_\bullet$ be a left $\Z\pi$-chain complex of finitely generated projective modules so that $C_i = 0$ for $i > n$.   Then the evaluation map 
$$
\ev: H_n(\ol B \otimes_{\Z\pi}  C_\bullet)  \to \Hom_{\Z\pi}(H^n\ol{C^\bullet},B )
$$
 is an isomorphism of abelian groups.
\end{enumerate}
\end{lemma}

\begin{proof}
(1) Fixing $B$, there is a natural transformation of functors from left $\Z\pi$-modules to abelian groups $\psi_L : \ol B \otimes_{\Z\pi} L\to \Hom_{\Z\pi}(L^*,B )$ given by $b \otimes c  \mapsto (\alpha \mapsto \ol {\alpha(c)}b)$.  
%
Hence there is a chain map $\ol B \otimes_{\Z\pi} C_\bullet \to \Hom_{\Z\pi}(\ol{C^\bullet}, B)$.  And for the cochain complex $\ol{C^{\bullet}}$, there is a graded map $H_*(\Hom_{\Z\pi}(\ol{C^\bullet}, B)) \to \Hom_{\Z\pi}(H^*\ol{C^\bullet},B)$, which sends a homology class $[a: C^i \to B]$ to the map $[\alpha] \mapsto a(\alpha)$.

(2) If $L$ is finitely generated projective, then $\psi_L$ is an isomorphism.  Hence $\ol B \otimes C_\bullet \to \Hom_{\Z\pi}(\ol{C^\bullet}, B)$ is an isomorphism of chain complexes.  The map  $H_n( \Hom_{\Z\pi}(\ol{C^\bullet}, B))  \to \Hom_{\Z\pi}(H^n\ol{C^\bullet},B )$ is an isomorphism by the left exactness of Hom. Indeed if $C^{i} = (C_{i})^{\dagger}$, then the exactness of 
$$
0 \leftarrow H^{n}\ol{C^{\bullet}} \leftarrow C^{n} \leftarrow C^{n-1}
$$
implies the exactness of 
$$
0 \to \Hom_{\Z\pi}( H^{n}\ol{C^{\bullet}},B) \to \Hom_{\Z\pi}( C^{n},B) \to \Hom_{\Z\pi}( C^{n-1},B)
$$
\end{proof}

For $\alpha \in H^i\ol{C^\bullet}$ and $z \in H_i(\ol B \otimes_{\Z\pi}  C_\bullet)$, define the Kronecker pairing 
$$
\langle \alpha, z \rangle := (\ev z) \alpha  \in B.
$$

\begin{proof}[Proof of Theorem \ref{PDn_theorem}]

Let $M(c)$ be the mapping cylinder of a map $Y \to K(\pi,1)$ realizing $c : \pi_1Y \to \pi$.  Let $\wt Y \to Y$ be the  $\pi$-cover induced by the universal cover $\wt{M(c)} \to M(c)$.
Let $B$ be a $(\Z\pi,\Z\pi)$-bimodule.   Define the left $\Z\pi$-chain complexes,
\begin{align*}
C_\bullet(\pi;B) & = B \otimes_{\Z\pi} S_\bullet(\wt{M(c)})\\
C_\bullet(\pi,Y;B) & = B \otimes_{\Z\pi} S_\bullet(\wt{M(c)},\wt Y)
\end{align*}
and the right $\Z\pi$-cochain complexes,
\begin{align*}
C^\bullet(\pi;B) & = \Hom_{\Z\pi}(C_\bullet(\pi;B) ,\Z\pi)\\
C^\bullet(\pi,Y;B) & =\Hom_{\Z\pi}(C_\bullet(\pi,Y;B) ,\Z\pi).\ 
\end{align*}

A chain complex $P_\bullet$ of left $\Z\pi$-modules has {\em type FP} if each $P_i$ is projective and if $\oplus_i P_i$ is finitely generated.  A chain complex has support in $[a,b]$ if $P_i = 0$ for $i \not \in [a,b]$.     The proof of Corollary 5.1 of \cite{Wall_FII} shows that if $P_\bullet$ is a chain complex of type FP which vanishes in negative degrees, and if $H^i(\Hom_{\Z\pi}(P_\bullet,\Z\pi)) = 0$ for $i > N$, then $P_\bullet$ has the chain homotopy type of a chain complex of type FP with support in $[0,N]$.

Since $(\pi,Y,c)$ is a Poincar\'e duality group with boundary of dimension $n$, the above paragraph shows that the chain complex $C_\bullet(\pi,Y;\Z\pi)$ has the chain homotopy type of a type FP  chain complex $C_\bullet$ supported in $[0,n]$.

Applying  Lemma \ref{evaluation lemma} with $B = \Z$  to $C_\bullet$ gives
\[
 H_n(\ol\Z \otimes_{\Z\pi} C_\bullet) \xrightarrow{\cong} \Hom_{\Z\pi}(H^n\ol C^\bullet, \Z) = \Hom_{\Z\pi}(\Z, \Z)
= \mathbb{Z}.
\]
and hence
$H_n(\pi,Y;\ol\Z)\cong \Z$. 
Let $[\pi,Y,c] \in H_n(\pi,Y;\ol \Z)$ be a generator.   Following the conventions of \cite{Hi20},  the cap product induces maps of left $\Z\pi$-chain complexes
\begin{align*}
 \cap[\pi,Y,c] & : \ol{C^{n-\bullet}(\pi,Y; \Z\pi )} \to  C_\bullet(\pi; \ol{\Z\pi}) \\
  \cap[\pi,Y,c] & : \ol{C^{n-\bullet}(\pi; \Z\pi )} \to  C_\bullet(\pi,Y; \ol{\Z\pi}) \\
    \cap\partial [\pi,Y,c] & : \ol{C^{n-\bullet}(Y; \Z\pi )} \to  C_\bullet(Y; \ol{\Z\pi}).
\end{align*}

We  claim that
$$
\cap [\pi, Y,c] : \ol{H^{n}(\pi,Y; \Z\pi)} \to  H_0(\pi; \ol\Z \otimes_{\Z\pi} \Z\pi) =H_0(\pi; \ol{\Z\pi}) 
$$
is an isomorphism of infinite cyclic groups. 
Let $\varepsilon : H_0(\pi; \ol{\Z\pi}  ) \to \Z \otimes_{\Z\pi} \ol{\Z\pi} = \Z$ be the augmentation, which is an isomorphism.   For $\alpha  \in H^n(\pi,Y; \Z\pi)$, 
$$
\varepsilon (\alpha \cap [\pi, Y]) = \langle \alpha, [\pi, Y] \rangle,
$$
see, for example, Proposition 3.21 of \cite{DK}.  Lemma \ref{evaluation lemma} (with $B = \Z$) implies that there is a $\alpha $ so that 
$\langle \alpha, [\pi, Y] \rangle = 1$.   It follows that $\cap [\pi, Y]$ is an epimorphism, hence an isomorphism, since the domain and codomain are infinite cyclic.

Thus the chain level cap product 
$$
\cap [\pi, Y] : \ol{C^{n-\bullet}(\pi,Y; \Z\pi)} \to C_\bullet(\pi; \ol{\Z\pi})
$$
gives a quasi-isomorphism of $\Z\pi$-chain complexes which have the chain homotopy type of finite projective chain chain complexes, hence is a chain homotopy equivalence.    It follows that the cap product isomorphism induces an isomorphism 
$$
 \cap[\pi,Y,c] : H^{n-i}(\pi,Y; \ol{\Z\pi} ) \xrightarrow{\cong}  H_{i}(\pi; \Z\pi).
$$
 Our conclusion then follows from Corollary C of \cite{KQS} (giving the other Poincar\'e-Lefschetz duality) and the fact that $\pi$ is finitely presented (giving that the spaces are finitely dominated).

\end{proof}

A standard result (see \cite{JW}, Bieri \cite{Bieri}, or 
 \cite{Brown}) states that if $\pi'$ is a $PD_n$-group and is finite index in $\pi$, then $\pi$ is a $PD_n$-group.    The corresponding result holds for Poincar\'e duality groups with boundary.
 
\begin{proposition}
Let $(\pi,Y,c)$ be a triple consisting of a group $\pi$, a space $Y$, and a homomorphism $c : \pi \to \pi_1Y$.   Let $\pi'$ be a finite index subgroup of $\pi$, and let $Y' \to Y$ be the induced cover and $c' : \pi' \to \pi_1Y'$ the induced map.  If $(\pi',Y',c')$ is a Poincar\'e duality  group with boundary of dimension $n$, then so is $(\pi,Y,c)$.
\end{proposition}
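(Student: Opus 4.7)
We must verify the three defining conditions of Definition \ref{pdgb} for $(\pi,Y,c)$: that $\pi$ has type FP, that $Y$ is an $(n-1)$-dimensional Poincar\'e complex, and that $H^i(\pi,Y;\zp)$ is infinite cyclic as an abelian group for $i=n$ and vanishes otherwise. (There appears to be a typographical inversion in the proposition statement; I assume the convention of Definition \ref{pdgb}, so $c:\pi_1Y\to\pi$ and $c':\pi_1Y'\to\pi'$ is its restriction to the cover.)

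The first condition is standard: type FP is preserved under passage to overgroups of finite index, since $\zp$ is finitely generated free as a $\Z\pi'$-module and so a finite type FP resolution of $\Z$ over $\Z\pi'$ induces one over $\zp$ (cf.\ \cite{Brown}). The second condition is the classical fact that if a finite cover of a space is a Poincar\'e complex then so is the base: one transfers the fundamental class of $Y'$ to obtain one on $Y$ and checks the cap-product isomorphism by passing back to the cover. Since $Y$ and $Y'$ need not be path-connected, one applies this component by component, grouping the components of $Y'$ lying over each component of $Y$.

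The main step is the cohomology computation, for which the tool is Shapiro's lemma for pairs. Model $(K(\pi,1),Y)$ by the mapping cylinder pair $(M(c),Y)$; then $(M(c'),Y')$ is the $\pi'$-cover of $(M(c),Y)$, and their universal covers agree, so $C_*:=C_*(\wt{M(c)},\wt Y)$ serves as the relative chain complex for both pairs, carrying $\zp$- and $\Z\pi'$-module structures respectively. Since $[\pi:\pi']<\infty$, induction and coinduction coincide and there is an isomorphism of left $\zp$-modules $\zp\cong\Hom_{\Z\pi'}(\zp,\Z\pi')$. By Hom--tensor adjunction,
$$
\Hom_{\zp}\bigl(C_*,\Hom_{\Z\pi'}(\zp,\Z\pi')\bigr)\cong\Hom_{\Z\pi'}(C_*,\Z\pi'),
$$
and taking cohomology yields $H^i(\pi,Y;\zp)\cong H^i(\pi',Y';\Z\pi')$, which equals $\Z$ for $i=n$ and $0$ otherwise by hypothesis.

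The main potential obstacle is the Poincar\'e complex condition on $Y$, which requires invoking the standard transfer argument for finite covers of Poincar\'e complexes and handling the non-connected case carefully. The cohomological step is then formal once the Shapiro-type adjunction above is in place.
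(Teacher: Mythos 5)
The paper does not actually supply a proof of this proposition; it remarks only that the classical $PD_n$ arguments (Johnson--Wall, Bieri, Brown) can be modified, or alternatively that Theorem H of \cite{KQS} could be adapted. Your strategy---splitting the verification into the three conditions of Definition \ref{pdgb} and handling the cohomological one by a Shapiro-lemma-for-pairs computation---is precisely the ``modify the classical $PD_n$ proof'' route the paper gestures at. Your identification of the typographical inversion in the direction of $c$ is correct, and the central step, using $\Z\pi \cong \Hom_{\Z\pi'}(\Z\pi,\Z\pi')$ (coincidence of induction and coinduction for finite index) together with the restriction--coinduction adjunction applied to the common relative chain complex $C_*(\wt{M(c)},\wt{Y})$, is sound and gives $H^i(\pi,Y;\Z\pi)\cong H^i(\pi',Y';\Z\pi')$ as needed.

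There is, however, a genuine gap in your treatment of the type FP condition. You assert that ``a finite type FP resolution of $\Z$ over $\Z\pi'$ induces one over $\Z\pi$,'' but induction does not do this: if $P_*\to\Z$ is a finite projective $\Z\pi'$-resolution, then $\Z\pi\otimes_{\Z\pi'}P_*$ resolves $\Z\pi\otimes_{\Z\pi'}\Z\cong\Z[\pi/\pi']$, not $\Z$. In fact the implication ``$\pi'$ of type FP and $[\pi:\pi']<\infty$ implies $\pi$ of type FP'' is false as a blanket statement: $\Z$ has index two in $\Z\times\Z/2$, which is not FP. The correct argument (Brown, Chapter VIII) first shows $\pi$ is $FP_\infty$ (a commensurability invariant) and then invokes Serre's theorem that $\cd(\pi)=\cd(\pi')<\infty$, which requires $\pi$ to be torsion-free. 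This torsion-freeness hypothesis is absent from the proposition as stated in the paper, and since a group of type FP is automatically torsion-free the conclusion genuinely fails without it; the hypothesis must be added, and your proof should use it explicitly at this step. A smaller point: the claim that $Y$ is a Poincar\'e complex (including its finite domination) because its finite cover $Y'$ is one is essentially a transfer argument and is believable, but it is less routine than you suggest when $Y$ is merely a space rather than a closed manifold, and deserves a reference or a sketch rather than a bare appeal to ``the classical fact.''
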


The proofs of the $PD_n$-result in the above three references can be  modified to give a proof of the above proposition.

Alternatively, Theorem  \ref{PDn_theorem} shows that $(M(c'),Y')$ is a Poincar\'e pair, so its finite cover $(M(c),Y)$ is a Poincar\'e pair by Theorem H of \cite{KQS}, and hence $(\pi,Y,c)$  is a Poincar\'e duality group with boundary by Poincar\'e-Lefschetz duality.

\section{The Borel Conjecture for compact aspherical manifolds with boundary}  \label{uniqueness}

The {\em Borel Conjecture for a compact aspherical manifold $M$} states that any homotopy equivalence $h : W \to M$ from a compact manifold $W$ which restricts to a homeomorphism $h : \partial W \xrightarrow{\cong} \partial M$ on the boundary is homotopic to a homeomorphism with the homotopy fixed on the boundary.  

The Borel Conjecture has been proven for a wide class of manifolds $M$; it is possible, even likely, that it holds in general.   

The following theorem is quite well-known (see e.~g.~\cite{W23}); however we will outline it since the corresponding discussion of the  Existence Conjecture is not well-known and we will need the set-up.   The reader is advised to skip the proof, but to read the end of this section where we discuss applications and variants of the Borel Conjecture.

To apply the FJC to the Borel Conjecture, we need to define the geometric structure set.   Let $(X,Y)$ be a CW-pair where $X$ has a single component.   
An {\em manifold structure on $(X,Y)$} is a  homotopy equivalence $h : W \to X$ which restricts to a  homotopy equivalence on $\partial W \to Y$, where $W$ is a compact topological manifold with boundary.
An {\em manifold structure on a CW-pair $(X,Y)$ relative to $Y$} (or just a structure rel $Y$) is a manifold structure $h: W \to X$  which restricts to a homeomorphism $h : \partial W \to Y$.  Two  structures   $h: W \to X$ and $h': W' \to X$ relative to $Y$ are {\em equivalent}  if there is a homeomorphism $F :W \to W'$ so that $h \simeq h' \circ F \rel \partial W$, i.e.~ $h|_{\partial  W} = h' \circ F |_{\partial  W}$, and there is a homotopy $H : W \times I \to X$ so that $H_0 = h$, $H_1 = h' \circ F$ and $H_t|_{\partial W} = h|_{\partial W}$ for all $t \in I$.
In other words, the diagram below commutes up to homotopy, and where the homotopy is constant restricted to the boundary.
$$
\begin{tikzcd}
(W, \partial W) \arrow[dd,"F"] \arrow[dr,"h"] & \\
& (X,Y) \\
(W', \partial W') \arrow[ur,"h'" ']
\end{tikzcd}
$$
The set of equivalence classes of manifold structures on $(X,Y)$ is called the rel $Y$ structure set and is denoted
$
\cS(X \rel Y).
$
The Borel Conjecture for $M$ is precisely the statement that $\cS(M \rel \partial M)$ is a singleton (represented by the identity map.)   There is also a variant  structure set $\cS(X,Y)$ which allows $h$ and $h'$ to be homotopy equivalences of pairs.   

There are also decorated versions of the rel $Y$ structure set, most notably $\cS^s(X \rel \partial Y)$ where elements are represented by simple homotopy equivalences $h : W \to X$ which restrict to a homeomorphism $\partial W \to Y$.    If $\Wh(\pi_1X) = 0$, then $\cS^s(X \rel \partial Y) = \cS(X \rel \partial Y)$

We now give the proof of Theorem \ref{BC} from the introduction.

\begin{proof}[Proof of Theorem \ref{BC}]  We will start with the discussion of the high-dimensional case \ref{BC}(2).  
For any group $\g$ with orientation character $w : \g \to \{\pm 1 \}$, there are assembly maps
\begin{align*}
H_*(B\g; \bfK) & \to K_*(\Z \g), \\
H_*(B\g,w; \bfL) & \to L_*(\Z \g,w).
\end{align*}
Here the Quillen spectrum $\bfK$ has homotopy groups $K_n(\Z)$ and the Sullivan-Quinn-Ranicki spectrum $\bfL$ has homotopy groups $L_n(\Z)$ for $n \in \Z$.  In the nonorientable case ($w \not = 1)$, $\Z \g$ is given the $w$-twisted involution $\sum a_g g \mapsto \sum a_g w(g) g^{-1}$ and the generalized homology has twisted coefficients, see, for example, Appendix A of \cite{R92}.
The FJC in $K$-theory and $L$-theory for a torsion-free group $\g$ states that the assembly maps are isomorphisms.   (The FJC for groups with torsion is much more complicated, but we will not need this case.)   The Farrell-Jones Conjecture in $K$-theory for a torsion-free group $\g$ implies that $\Wh(\g)$, $\wt K_0(\Z\g)$, and $K_{-i}(\Z\g)$ for $i >0$ all vanish; this is the only consequence we need.

The FJC's in $K$- and $L$-holds for elementary amenable fundamental groups of compact aspherical 4-manifolds by Corollary 6 of \cite{DH2}, hence we do not include the validity of the FJC conjecture in part (1) of the statement of the theorem.  

A technicality is that $L$-groups come with $K$-theory ``decorations", for example there are $L$-groups $L_n^s(\Z \g,w) \to L^h_n(\Z \g,w) \to L^{\langle - \infty \rangle}_n(\Z \g,w)$.   The classification of manifolds up to simple homotopy uses the $s$-decoration, up to homotopy uses the $h$-decoration, and the FJC uses the $-\infty$ decorations.    But, assuming the FJC-conjecture in $K$-theory for a torsion-free group, this is irrelevant, because all decorations give the same $L$-group.   The easiest $L$-theory to define is $L^h_*(\Z \g,w)$, the $L$-theory based on free modules.   

We now review algebraic surgery exact sequences.     Let $\bfL\langle 1 \rangle \to \bfL$ be the 1-connective cover, which has the property that the map $\pi_n(\bfL\langle 1 \rangle) \to \pi_n(\bfL)$ is an isomorphism for $n \geq 1$ and $\pi_n(\bfL\langle 1 \rangle) = 0$ for $n \leq 0$.  By taking homotopy groups of the mapping cofibers of spectra level assembly maps,  for any connected  CW-complex $X$ with fundamental group $\g$ and orientation character $w$, there is  a commutative diagram of abelian groups with exact rows (see, for example, \cite[Definition 14.6]{R92})
$$
\begin{CD}
\cdots @>>> \cS^{\langle 1 \rangle}_{n+1,w}(X) @>>> H_n(X,w; \bfL\langle 1 \rangle) @>>> L_n(\Z\g,w)  @>>> \cdots \\
@. @VVV @VVV @| @.\\
\cdots @>>> \cS_{n+1,w}(X) @>>> H_n(X,w; \bfL) @>>> L_n(\Z\g,w)  @>>> \cdots 
\end{CD}
$$
Both the $L$-theory and the structure set are ``decorated", but this is irrelevant for a torsion-free group $\g$ provided the FJC in $K$-theory holds for $\g$.   In this case, 
the FJC conjecture in $L$-theory is precisely the statement that $\cS_{*,w}(B\g)=0$ in all degrees.

The main result of Ranicki's book \cite[Theorem 18.5]{R92} is that there is a bijection of sets $\cS^s(M \rel \partial M) \cong \cS^{\langle 1 \rangle,s}_{n+1,w}(M)$ for any compact manifold $M$ with boundary with $\dim M \geq 5$ or with $\dim M = 4$ and  ``good" fundamental group i.e. when $\pi_1M$ is finitely generated elementary amenable.   The decoration $s$ is dropped when the Whitehead group vanishes.

The commutative ladder above gives a long exact sequence 
\begin{multline} \label{one-connective-cover}
\cdots \to H_{n+1}(X,w; \bfL/\bfL\langle 1 \rangle) \to 
\cS^{\langle 1 \rangle}_{n+1,w}(X) \\ \to \cS_{n+1,w}(X) \to H_n(X,w; \bfL/\bfL\langle 1 \rangle) \to \cdots
\end{multline}

Note that if $n = \dim M$, the AHSS shows that $H_{n+1}(M,w; \bfL/\bfL\langle 1 \rangle) = 0$, so that the map from the 1-connective structure set to the structure set is injective.   

We are now at the end of the proof of \ref{BC}(2).  Let $M$ be a compact aspherical $n$-manifold as in the statement of the theorem.  The FJC's in $K$- and $L$-theory guarantee that $K$-theory decorations are irrelevant and that $\cS_{n+1,w}(M) = 0$.   Thus $\cS_{n+1,w}^{\langle 1 \rangle}(M) \cong \cS(M \rel \partial M)$ is also trivial.   

Next comes the four dimensional case.    We have classified the elementary amenable groups which are the fundamental groups of compact aspherical 4-manifolds with boundary (see Theorem A of \cite{DH2}) and all of these satisfy the FJC in $K$- and $L$-theory and all are good groups in the sense of \cite{FQ} and \cite{BKKPR}, hence the high-dimensional reasoning proof goes through and proves  \ref{BC}(1).  
\end{proof}

\begin{remark}
As stated in the introduction, Theorem \ref{BC}(2) is well-known to the experts.   The case of abelian fundamental group is contained in \cite[Essay V, Appendix C]{KS}.   Theorem \ref{BC}(2) was discussed in \cite{FRR} and the proof was given in \cite{L}, although stated for the closed case.

Some cases of Theorem \ref{BC}(1) are covered by \cite[Theorem on page 205]{FQ}.

Now we discuss the status of the Borel Conjecture.   It holds classically if $\dim M \leq 2$.   If $\dim M = 3$ and $M$ is orientable, the Borel Conjecture is a consequence of Perleman's Geometrization Theorem,  see \cite[Theorem 0.7]{KL} for references in the closed case.   Although we expect the Borel Conjecture to hold for nonorientable 3-manifolds, the proofs are unclear, due to both geometrization and due to three manifold decomposition theory.

The Borel Conjecture is not known in dimension 4 for any group that is not good.
 For example, it is not known if the Borel Conjecture holds for the product of a 2-torus with a surface of genus 2.   
 
 There are weaker versions of the Borel Conjecture in dimension four which hold when the FJC holds in $K$- and $L$-theory for that fundamental group - namely that the Borel Conjecture holds up to stable diffeomorphism.
 
In dimensions greater than four, Theorem \ref{BC}(2) shows that the validity of the Borel Conjecture depends on the FJC.  
A definitive list of groups for which the FJC in $K$- and $L$-theory is known is given in the upcoming book of L\"uck, {\em Isomorphism Conjectures in $K$- and $L$-theory}.   It is fair to say that it is currently known for many groups, but not in general.   For example, it is not known for all elementary amenable groups.

 \end{remark}

\subsection{Homeomorphic boundaries} \label{subsection_hb}

The theme of this subsection is the following question.

\begin{question}
If $W$ and $M$ are compact aspherical manifolds with isomorphic fundamental groups and homeomorphic boundaries, are $W$ and $M$ homeomorphic?
\end{question}

A classical example where the answer is no is given by the exteriors of the granny knot and the square knot, with torus boundary \cite{Fox}.   A four-dimensional example where the answer is no is given in Example \ref{nonhomeomorphic} (see also \cite{DH2}).

To reduce the question to the Borel Conjecture, we need to use elementary obstruction theory. 

For a collection of groups $H = \{H_\alpha\}$ and a group $G $, define an equivalence relation $\simeq$ on $\Hom(H,G):= \prod_\alpha \Hom(H_\alpha,G)$ by $\phi \simeq \phi'$ if there exist $\{g_\alpha \in G\}$ so that for all $\alpha$ and for all $h \in H_\alpha$, one has $\phi_\alpha (h) = g_\alpha \phi'_\alpha (h)  g_\alpha^{-1}$.   For a space $X$ with path-components $\{X_\alpha\}$, and for a path-connected space $Y$, there is a well-defined function
$$
\pi_1: [X,Y] \to \Hom(\pi_1X,\pi_1Y)/\simeq,
$$
independent of the choice of base-points, where $\pi_1X = \{\pi_1X_\alpha\}$.

Elementary obstruction theory (see \cite[Theorems 7.26 and 6.57]{DK}) proves the following lemma.

\begin{lemma} \label{ot}
Let $X$ be a CW-complex and $Y$ be aspherical.    Then
$$
\pi_1 : [X,Y] \to \Hom(\pi_1X,\pi_1Y)/\simeq
$$ 
is a bijection.
\end{lemma}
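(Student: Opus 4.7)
The plan is to reduce to the case where $X$ is path-connected and then apply the standard obstruction-theoretic classification of maps into an aspherical target. Since $Y$ is aspherical (in particular path-connected), $[X,Y] = \prod_\alpha [X_\alpha, Y]$, indexed by the path components $\{X_\alpha\}$ of $X$. The relation $\simeq$ was defined to allow an independent conjugating element $g_\alpha \in \pi_1 Y$ for each $\alpha$, so $\Hom(\pi_1 X, \pi_1 Y)/{\simeq}$ is the product of the quotients $\Hom(\pi_1 X_\alpha, \pi_1 Y)/\mathrm{conj}$ by the $\pi_1 Y$-conjugation action. The map $\pi_1$ of the lemma respects this product decomposition, so it suffices to prove the lemma when $X$ is path-connected.

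For connected $X$, I would first establish the based version: $[X,Y]_* \to \Hom(\pi_1 X, \pi_1 Y)$ is a bijection. For surjectivity, given $\phi : \pi_1 X \to \pi_1 Y$, compose a classifying map $X \to K(\pi_1 X, 1)$ (inducing the identity on $\pi_1$) with a map of Eilenberg--MacLane spaces $K(\pi_1 X, 1) \to K(\pi_1 Y, 1)$ realizing $\phi$, and then with a homotopy equivalence $K(\pi_1 Y, 1) \to Y$, which exists because $Y$ is aspherical. For injectivity, given based maps $f_0, f_1 : X \to Y$ inducing the same homomorphism on $\pi_1$, extend $f_0 \sqcup f_1$ to a homotopy $F : X \times I \to Y$ skeleton-by-skeleton. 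The obstruction to extending over the $(n+1)$-cells lies in $H^{n+1}(X \times I, X \times \partial I; \pi_n Y)$, which vanishes for $n \geq 2$ by asphericity of $Y$, while the $1$-dimensional obstruction vanishes precisely because $(f_0)_* = (f_1)_*$.

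Finally, I would pass from based to unbased classes: two based maps $f_0, f_1 : X \to Y$ are freely homotopic if and only if they differ by a change of basepoint in $Y$, whose effect on $\pi_1(Y, y_0)$ is conjugation by an element $g \in \pi_1 Y$. This yields the bijection $[X, Y] \to \Hom(\pi_1 X, \pi_1 Y)/\mathrm{conj}$ in the connected case, and combined with the product decomposition above recovers the full statement. The main obstacle is really only careful basepoint bookkeeping — in particular, confirming that in the disconnected case one may choose the change-of-basepoint independently on each component, which is exactly why the definition of $\simeq$ permits a separate $g_\alpha$ for each $\alpha$. All of this is standard and is precisely what is assembled in \cite[Theorems 7.26 and 6.57]{DK}, so the argument amounts to applying those results componentwise.
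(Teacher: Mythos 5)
Your argument is correct and is exactly what the paper's one-line proof defers to: the paper simply cites \cite[Theorems 7.26 and 6.57]{DK}, which contain precisely the componentwise reduction, the based obstruction-theoretic classification, and the passage from based to free homotopy that you spell out. No substantive difference in approach.
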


Assuming the Borel Conjecture, the question above is reduced to algebra in the following proposition.

\begin{proposition}  \label{BC_ot}
Let $W$ and $M$ be compact aspherical manifolds.   Suppose the Borel Conjecture holds for $M$.   Then $W$ is homeomorphic to $M$ if and only if there is a homeomorphism $h: \partial W \xrightarrow{\cong} \partial M$ and an isomorphism $\varphi: \pi_1 W \xrightarrow{\cong} \pi_1M$  
$$
\begin{tikzcd}
\pi_1 \partial W \arrow[r,"\pi_1(h)"] \arrow[d,"\pi_1(\partial W \hookrightarrow W)"] & \pi_1\partial M \arrow[d,"\pi_1(\partial M \hookrightarrow M)"] \\
\pi_1 W \arrow[r,"\varphi"] & \pi_1M
\end{tikzcd}
$$
so that  $\varphi \circ \pi_1(\partial W \hookrightarrow W) \simeq  \pi_1(\partial M\hookrightarrow M) \circ \pi_1(h) $. Furthermore, if these conditions hold then the homeomorphism $W \to M$ extends the given homeomorphism $\partial W \to \partial M$.
\end{proposition}

\begin{proof} Assume that   $\varphi \circ \pi_1(\partial W \hookrightarrow W) \simeq  \pi_1(\partial M\hookrightarrow M) \circ \pi_1(h)$.   Kirby and Siebenmann \cite{KS} showed that manifolds  have the homotopy type of a CW-complex. Using Lemma \ref{ot}, realize $\varphi$ by a homotopy equivalence $H : W \to M$ so that $\pi_1(H) = \varphi$.   By Lemma \ref{ot} again, 
$H \circ (\partial W \hookrightarrow W)\simeq (\partial M\hookrightarrow M) \circ h$.   Since $\partial W \hookrightarrow W$ is a cofibration, $H$ is homotopic to a map $K$ which restricts to $h$.    The Borel Conjecture then implies that $K$ is homotopic to a homeomorphism which restricts to $h$.
\end{proof}

A slogan summarizing the previous propostion is:
\medskip

{\em The homeomorphism type of a compact aspherical manifold is determined by its fundamental group, the homeomorphism type of its boundary, and the fundamental group system of the inclusion of its boundary. }

Here is a well-known corollary.

\begin{corollary}
  For any $n$,  the map from homeomorphism classes of compact contractible $n$-manifolds to the homeomorphism classes of homology $(n-1)$-spheres is injective.
\end{corollary}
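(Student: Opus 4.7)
The plan is to realize this corollary as an immediate application of Proposition~\ref{BUC_ot} in the case where the fundamental group is trivial. Suppose $W$ and $M$ are compact contractible $n$-manifolds with a homeomorphism $h : \partial W \xrightarrow{\cong} \partial M$; the goal is to conclude that $W$ and $M$ are homeomorphic. Contractibility forces $W$ (and $M$) to be aspherical, since the universal cover coincides with the manifold itself and is contractible by hypothesis, so the machinery of the earlier sections applies. A routine Poincar\'e--Lefschetz computation shows that $\partial W$ is a closed, connected $(n-1)$-manifold with the homology of $S^{n-1}$, which is what makes the target of the map in the statement sensible.

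To apply Proposition~\ref{BUC_ot}, one must verify that the Borel Uniqueness Conjecture holds for $M$ and that the fundamental group diagram of that proposition commutes. The BUC for $M$ follows from Theorem~\ref{BUC}: when $n = 4$, the trivial group is finite and hence elementary amenable, so case~(2) of Theorem~\ref{BUC} applies; when $n \geq 5$, the Farrell--Jones Conjecture in $K$- and $L$-theory is classical for the trivial group, so case~(3) applies. For the fundamental group square, take $\varphi = \id : \pi_1 W \to \pi_1 M$ (both trivial); the square commutes automatically because every group appearing is trivial. Proposition~\ref{BUC_ot} then furnishes a homeomorphism $W \to M$ extending the given boundary homeomorphism $h$, which proves injectivity of the map $W \mapsto \partial W$.

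No genuine obstacle is expected in this strategy: the corollary is a clean special case of the general package developed in Theorem~\ref{BUC} and Proposition~\ref{BUC_ot}, and all fundamental group hypotheses collapse because $\pi_1$ is trivial on both sides. The only mild verification, which I would state as a parenthetical remark rather than prove in detail, is the Poincar\'e--Lefschetz fact that the boundary of a contractible compact $n$-manifold is a homology $(n-1)$-sphere; this is what guarantees the map in the statement is defined and makes the hypothesis $n \geq 4$ natural, since for $n \leq 3$ the simply connected boundary would have to be $S^{n-1}$ itself and the result would follow by much more elementary means.
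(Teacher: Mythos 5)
Your proof is correct and follows exactly the route the paper intends: the corollary is stated immediately after Proposition~\ref{BUC_ot} as a direct application of it, and the paper supplies no separate argument. You correctly observe that contractibility forces trivial fundamental group (so the boundary is a homology $(n-1)$-sphere and the fundamental-group square commutes vacuously), and you verify the Borel Uniqueness Conjecture for $M$ via Theorem~\ref{BUC}(2) when $n=4$ and Theorem~\ref{BUC}(3) when $n\ge 5$.
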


Generalizations of the corollary are given in Section \ref{examples}.

\subsection{Observations about the Borel Conjecture}

We conclude this section with a few observations related to the Borel Conjecture.  

The Borel Conjecture for a manifold $M$ without boundary has a particularly appealing consequence -- any closed aspherical manifold with fundamental group $\pi_1M$ is not just homotopy equivalent to $M$, but actually homeomorphic to $M$.   In other words, any two closed $K(\pi,1)$-manifolds are homeomorphic.   

Now suppose that $M$ is a compact, aspherical manifold with boundary.    The first observation is that it is rare for the boundary to be aspherical (e.g. $(D^n,S^{n-1})$), although it is possible (e.g. $(T^n \times D^2, T^{n+1}))$.    The second observation is that the boundary is not determined by the group, for example Kervaire showed that any homology sphere of dimension greater than 3 bounds a compact contractible manifold, and Mazur constructed an example of a compact contractible 4-manifold with boundary a nontrivial homology 3-sphere.
In particular, it is not true that any two compact $K(\pi,1)$-manifolds are homeomorphic, even when the fundamental group is trivial.

We now consider variants of the Borel Conjecture.   

There is a version for open aspherical manifolds involving homotopy equivalences outside a compact set.   This can be studied via surgery theory and may be universally true.   However we will not pursue this.   

The Borel Conjecture does not hold in general in the smooth category.   Indeed if $\Sigma^7$ is an exotic 7-sphere then $T^7 \# \Sigma^7$ is homeomorphic, hence homotopy equivalence to $T^7$, but they are not diffeomorphic.   But dimension 4 is still a mystery.   Only recently \cite{DHHRS} constructed closed 4-smooth manifolds which are homotopy equivalent (in fact homeomorphic) which are not diffeomorphic.   But the case of the 4-torus is still open.

One could (falsely) conjecture that $\cS(M, \partial M) = *$ for $M$ compact aspherical, i.e. that any homotopy equivalence of compact manifold pairs $h : (W, \partial W) \to (M, \partial M)$ is homotopic to a homeomorphism.    But it is an exercise in the surgery exact sequence to see that for any $k \geq 2$, there is a compact manifold $W$ homotopy equivalent to $T^k \times D^3$ whose boundary is homotopy equivalent to $T^k \times S^2$,
but which is not itself homeomorphic to $T^k\times{D^3}$.
 (Hint: the relative $L$-theory is zero but the relative normal invariants are not.)    
 
 However, if $\cS(M \rel\partial M) = *$ and $\cS(\partial M) = *$, the homotopy extension property shows that $\cS(M, \partial M) = *$.  
  This is the case, for example, if both $M$ and $\partial M$ satisfy the hypothesis of Theorem \ref{BC}.   For example $\cS(T^n \times D^2, T^{n+1}) = *$.    But the boundary need not be aspherical, for example, spheres are topologically rigid, so  $\cS(D^n,S^{n-1})=*$.   A survey of topologically rigid closed manifolds is given by Kreck and L\"uck \cite{KL}.
 
 Theorem C of \cite{DH2} shows the following theorem.
 
\begin{theorem}
Let $M$ be a compact aspherical 4-manifold with elementary amenable fundamental group and orientable boundary.   Then  $\cS(M,\partial M) = *$.
\end{theorem}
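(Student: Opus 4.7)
The plan is to verify the two hypotheses highlighted in the paragraph preceding the theorem: \emph{if both $\cS(M \rel \partial M) = *$ and $\cS(\partial M) = *$, then the homotopy extension property forces $\cS(M, \partial M) = *$.} Once these two inputs are available, the argument is purely formal, so the task reduces to establishing (a) the relative Borel Uniqueness Conjecture for $M$ and (b) topological rigidity of the 3-manifold $\partial M$.

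For (a), I would invoke Theorem \ref{BUC}(2) directly. Since $M$ is a compact aspherical $4$-manifold with elementary amenable fundamental group, the Borel Uniqueness Conjecture holds for $M$, so $\cS(M \rel \partial M) = *$. This step is essentially a citation and requires no new work. For (b), $\partial M$ is a closed \emph{orientable} $3$-manifold, and I would deduce $\cS(\partial M) = *$ from the topological rigidity of closed orientable $3$-manifolds, which is a consequence of Perelman's geometrization together with Waldhausen's rigidity for Haken manifolds. Implicit here is the fact that the 3-manifolds arising as boundaries in this setting (as classified in \cite{DH2}) do not exhibit the lens-space phenomenon of being homotopy equivalent but not homeomorphic; when the component is aspherical this follows from Theorem \ref{BUC}(1).

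Once (a) and (b) are in hand, the combination is straightforward. Given a representative $h : (W, \partial W) \to (M, \partial M)$ of a class in $\cS(M, \partial M)$, the restriction $h|_{\partial W} : \partial W \to \partial M$ is a homotopy equivalence of closed $3$-manifolds, and by (b) it is homotopic to a homeomorphism $g$. Because $\partial W \hookrightarrow W$ is a cofibration, the homotopy extension property lifts this boundary homotopy to a homotopy from $h$ to a map $h'$ with $h'|_{\partial W} = g$. Thus $[h] = [h']$ in $\cS(M, \partial M)$, and $h'$ in turn represents an element of $\cS(M \rel \partial M)$, which is trivial by (a). Hence $[h] = *$.

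The main obstacle is step (b), because the boundary need not be aspherical and $\cS$ of a general closed $3$-manifold is not known to vanish a priori (lens-space type examples show it can fail). Overcoming this requires either a general appeal to the topological rigidity of closed orientable $3$-manifolds via geometrization, or else the classification in \cite{DH2} of the possible boundary components, after which one checks rigidity case by case.
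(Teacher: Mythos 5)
The paper gives no proof of this statement: it is obtained verbatim as a citation of Theorem C of \cite{DH2}, stated immediately after the paragraph explaining the general mechanism (if $\cS(M \rel \partial M) = *$ and $\cS(\partial M) = *$, then the homotopy extension property gives $\cS(M, \partial M) = *$). Your proposal follows exactly that recipe, and the formal reduction to (a) and (b) together with the HEP argument in your third paragraph is correct; step (a) is indeed an immediate application of Theorem \ref{BUC}(2).

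The genuine gap is step (b). The assertion that ``topological rigidity of closed orientable $3$-manifolds'' follows from geometrization plus Waldhausen is false: the lens spaces $L(7,1)$ and $L(7,2)$ are closed, orientable, homotopy equivalent, and not homeomorphic, and more generally $\cS(L(p,q))$ is well known to be nontrivial. Geometrization plus Mostow/Waldhausen/Turaev gives rigidity for \emph{aspherical} closed orientable $3$-manifolds, and the Poincar\'e Conjecture handles $S^3$, but the elliptic case is precisely where it fails, so the first of the two fallbacks you list in your final paragraph simply does not exist. What remains is the second fallback: use the classification of boundaries from \cite{DH2} and check rigidity case by case. One does expect those boundaries to be rigid (homology $3$-spheres, aspherical $3$-manifolds, and the other types that actually arise under the hypotheses do not exhibit lens-space behavior), but that verification is the non-formal content of the theorem, and it is precisely what is being deferred to \cite{DH2} --- which is also all the paper does. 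As written, the proposal correctly identifies the strategy the paper intends, but does not close the essential gap; it is an outline rather than a proof.
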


\section{The Existence Conjecture for compact aspherical manifolds with boundary} \label{existence}

\medskip
The {\em  Existence Conjecture for a Poincar\'e pair $(X,Y)$ with $X$ aspherical and $Y$  a closed manifold} states that $\cS(X \rel Y)$ is nonempty, i.e., there is a commutative square 
$$
\begin{CD}
\partial M @>\cong>> Y\\
@VVV @VVV \\
M @>\sim>> X
\end{CD}
$$
where $M$ is a compact manifold, the upper horizontal map is a homeomorphism and the lower horizontal map is a homotopy equivalence.    This is equivalent to saying that $Y$ is the boundary of a compact aspherical manifold $M$ so that the inclusion of $Y$ in $X$ and the inclusion of $Y$ in $M$ induces the same map on the fundamental group.  Theorem \ref{PP} shows that Existence Conjecture is equivalent to Conjecture \ref{EC} when $Y$ is nonempty.
We will see that when $Y$ is nonempty, the  Existence Conjecture holds for a wide class of Poincar\'e pairs:  it is possible even likely, that it always holds in this case.    However, if $Y$ is empty the validity of the Existence Conjecture (Wall's Problem) is more questionable, although no counterexamples are known.

\begin{proof}[Proof of Theorem \ref{ET}]
The dimension 2 case is classical.
 The proof for $n \geq 4$ is based on the total surgery obstruction, expositions of which are given in \cite{Ra}, \cite{R92}, and \cite{KMM}.    If $(X,Y)$ is a finite Poincar\'e pair of dimension $n$ with closed manifold boundary, then one defines the {\em total surgery obstruction} $s(X \rel Y) \in \cS^{\langle 1 \rangle,h}_n(X)$. This is a homotopy invariant relative to $Y$ and vanishes if $\cS(X \rel Y)$ is nonempty.   If $n \geq 5$ or if $n=4$ and $\pi_1X$ is ``good", then $s(X \rel Y) = 0$ implies that  $\cS(X \rel Y)$ is nonempty.  Our sequel paper \cite{DH2} shows that if $(X,Y)$ is a four-dimensional Poincar\'e pair with $X$ aspherical and $\pi_1X$ elementary amenable, then $\pi_1X$ is good and the Farrell-Jones Conjectures in $K$ and $L$-theory hold for $\pi_1X$.
 
 We next recall that if the FJC in $K$-theory holds for a torsion-free group, then decorations are irrelevant for the algebraic structure groups and if the FJC in L-theory holds for the fundamental group of a finite dimensional aspherical complex $X$, then the nonconnective algebraic structure group $\cS_k(X)$ vanishes for all $k \in \Z$.

 Note that the exact sequence \eqref{one-connective-cover} and the Atiyah-Hirzebruch Spectral Sequence show that for any $n$-dimensional CW-complex $X$, there is an exact sequence
$$
0 \to \cS^{\langle 1 \rangle}_{n+1}(X) \to \cS_{n+1}(X) \to H_n(X;\Z^w) \to \cS^{\langle 1 \rangle}_{n}(X) \to \cS_{n}(X).
$$
For $(X,Y)$ a Poincar\'e pair with $X$ connected and $Y$ nonempty, $H_n(X;\Z^w) \\ \cong H^0(X,Y;\Z) = 0$.  
And the FJC hypotheses guarantee that $\cS_n(X) = 0$.   Thus the total surgery obstruction $s(X \rel Y) \in\cS^{\langle 1 \rangle}_{n}(X)= 0$.   Hence $\cS(X \rel Y)$ is nonempty as claimed.
\end{proof}

Theorem \ref{ET}(2) when $\pi$ is trivial or infinite cyclic is due to Freedman and Quinn \cite[Proposition 11.6A]{FQ}.

The  Existence Conjecture is open in dimension 3, but is known in dimensions less than 3 by work of Eckmann-M\"uller \cite{EM80}.

{\em Warning:}   It is possible that every aspherical Poincar\'e complex $X$ has the homotopy type of a closed manifold, but this is not implied by the standard conjectures of high dimensional topology.  Ranicki's total surgery obstruction gives an element $s(X) \in \cS^{\langle 1 \rangle}_n(X)$ which vanishes if and only if $X$ has the homotopy type of a closed manifold provided $n = \dim X > 4$.   Furthermore, if the FJC holds in $K$ and $L$-theory for $\pi_1X$, then $\cS_*(X) = 0$.  Thus $\cS^{\langle 1 \rangle}_n(X) = H_n(X;\Z^w)  \cong \Z$.    But computing this integer is very difficult \cite{Q}, although it has been computed in a nontrivial case \cite{BLW}.    Hence we don't elevate the question of whether an aspherical Poincar\'e complex has the homotopy type of a closed manifold to a conjecture, because we see no conceptual reason why it should be true. 

One could also ask whether a Poincar\'e pair $(X,Y)$ with $X$ aspherical must have the homotopy type of a compact manifold pair.   However, in this case, it is even more dubious, since there is an exact sequence
$$
H_n(X,Y;\Z^w ) \to \cS^{\langle 1 \rangle}_{n}(X,Y) \to \cS_{n}(X,Y).
$$
and $\cS_{n}(X,Y)$ might be nonzero.  In fact, the following example was 
suggested by Shmuel Weinberger.    It gives such a Poincar\'e pair with a nonreducible Spivak bundle.

\begin{lemma} \label{E_false_PP}
There is a Poincar\'e pair $(X,Y)$ with $X$ aspherical which is not homotopy equivalent to a compact manifold pair.   
\end{lemma}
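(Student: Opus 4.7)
The plan is to exhibit an aspherical Poincar\'e pair $(X, Y)$ whose total surgery obstruction $s(X, Y) \in \cS^{\langle 1 \rangle}_n(X, Y)$ is nonzero. Using the five-term exact sequence
\[
\cS_n(X) \to \cS_n(X, Y) \to \cS_{n-1}(Y) \to \cS_{n-1}(X)
\]
together with the vanishing $\cS_\ast(K(\pi, 1)) = 0$ granted by the Farrell--Jones Conjecture in $K$- and $L$-theory for $\pi := \pi_1 X$, one obtains $\cS_n(X, Y) \cong \cS_{n-1}(Y)$ whenever $X$ is aspherical. The natural boundary map on one-connective structure groups sends $s(X, Y)$ to the total surgery obstruction $s(Y)$ of the boundary, so it suffices to construct an aspherical Poincar\'e pair whose boundary is a Poincar\'e complex not homotopy equivalent to any closed topological manifold, equivalently one with non-reducible Spivak normal fibration.

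By Theorem~\ref{PDn_theorem}, this amounts to finding a PD-group with boundary $(\pi, Y, c)$ in the sense of Definition~\ref{pdgb} with $Y$ not homotopy equivalent to a manifold. The construction starts from an aspherical manifold pair $(W, N)$ realizing a baseline PD-group with boundary $(\pi, N, i)$, say $W = T^k \times D^{n-k}$ with $n - k \geq 6$ so that high-dimensional surgery tools apply. Ranicki's algebraic Poincar\'e surgery machinery then allows one to prescribe a nonzero element $\eta$ in the cokernel of the reducibility map $[N, B\mathrm{STOP}] \to [N, B\mathrm{SG}]$ and realize it by a degree-one map $Y \to N$ whose source $Y$ is a Poincar\'e complex with non-reducible Spivak normal fibration. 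Such $\eta$ exist as soon as the dimension of $N$ is large enough for the relevant homotopy groups of $B\mathrm{SG}/B\mathrm{STOP}$ to be nontrivial.

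The main obstacle is verifying that the replacement of $N$ by $Y$ yields a valid PD-group with boundary $(\pi, Y, c)$, i.e.\ that the cohomological condition in Definition~\ref{pdgb} survives the modification. This is the technical heart of the example: one works at the $\zp$-chain level and checks that the algebraic Poincar\'e cobordism underlying the surgery preserves the relative cohomology $H^*(\pi, -; \zp)$ up to isomorphism, so that the criterion for a PD-pair is unchanged even though the homotopy type of $Y$ genuinely differs from that of $N$. With this in hand, Theorem~\ref{PDn_theorem} exhibits $(K(\pi, 1), Y)$ as the desired aspherical Poincar\'e pair, and the non-reducibility of its Spivak fibration rules out any manifold pair in its homotopy class.
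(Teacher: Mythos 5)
Your proposal takes a genuinely different route from the paper, but it has a critical gap: the construction it hinges on is announced rather than carried out.

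The paper's argument is completely constructive. It starts from a non-reducible spherical fibration $S(\eta)\to S^3$ (detected by the Gitler--Stasheff class), uses the Baumslag--Dyer--Heller theorem to replace $S^3$ by a finite aspherical complex $X'$ mapping to it by a homology isomorphism, takes a regular neighborhood $N$ of $X'$ in Euclidean space (a parallelizable compact aspherical manifold with boundary), pulls the fibration back to $N$, and then invokes the Poincar\'e-triad theorem (Theorem G of \cite{KQS}) applied to the fibration $(D(\nu),S(\nu))\to N$ together with the manifold pair $(N,\partial N)$ to produce the Poincar\'e pair $(X,Y)$. Because $N$ is parallelizable, the Spivak fibration of the resulting pair is exactly the pulled-back $\nu$, hence non-reducible. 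Every step is explicit and the Poincar\'e-pair structure is guaranteed by a cited theorem. Your plan instead wants to start with a known aspherical manifold pair such as $T^k\times D^{n-k}$, perform Poincar\'e surgery on the boundary to force non-reducibility of its Spivak fibration, and then re-derive the Poincar\'e-pair structure from Theorem~\ref{PDn_theorem}.

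The problem is that you identify the exact place where this plan could fail --- verifying that the modified triple $(\pi, Y, c)$ still satisfies the cohomological condition of Definition~\ref{pdgb} --- and then declare it ``the technical heart of the example'' without doing it. But that step \emph{is} the lemma: producing any $Y$ with a non-reducible Spivak fibration is comparatively easy, and the entire difficulty is showing that such a $Y$ can be coupled to an aspherical $X$ so that the pair is Poincar\'e. Asserting that ``Ranicki's algebraic Poincar\'e surgery machinery'' realizes an arbitrary element of $\mathrm{cok}([N,B\mathrm{STOP}]\to[N,B\mathrm{SG}])$ by a Poincar\'e complex $Y$ mapping to $N$, \emph{and simultaneously} that the relative $\zp$-cohomology $H^*(\pi,Y;\zp)$ is unchanged, is a substantial claim that needs an actual argument or a citation to a theorem that does it; neither is given. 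Until that step is supplied, the proof is a sketch of a strategy, not a proof.

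Two smaller points. First, the opening reduction via the total surgery obstruction and the (asserted) exact sequence $\cS_n(X)\to\cS_n(X,Y)\to\cS_{n-1}(Y)\to\cS_{n-1}(X)$ is a detour: your final argument only uses the elementary fact that a Poincar\'e pair with non-reducible Spivak fibration over $Y$ cannot be homotopy equivalent to a manifold pair (since $\nu_Y$ is the restriction of $\nu_X$), which needs none of that machinery. Second, ``a Poincar\'e complex not homotopy equivalent to any closed manifold, equivalently one with non-reducible Spivak normal fibration'' is not an equivalence --- reducibility is necessary but not sufficient, as the quadratic surgery obstruction can still be nonzero. You only use the easy direction, so this is a wording slip rather than a logical error, but it should be fixed.
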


\begin{proof}
We first construct a spherical fibration 
over a compact aspherical parallelizable manifold with boundary which has no stable topological reduction to a sphere bundle.   We start with a nonreducible spherical fibration
$$
S(\eta) \to B
$$
over a finite CW-complex.   For example, we could take $B= S^3$ and ``clutch" along the nontrivial element of $\pi_4S^2 = \pi_2(\text{hAut}_*(S^2))$ to construct a spherical fibration over $S^3$ whose fiber has the homotopy type of $S^2$.   (Here  $\text{hAut}_*(S^2)$ is the topological monoid of based self-homotopy equivalences of $S^2$.)  The exotic characteristic class of \cite{GS65} shows that this spherical fibration is not stably reducible.  

By work of Baumslag-Dyer-Heller \cite{BDH80}, there is a homology equivalence  $X' \to B$ with $X'$ a finite aspherical  simplicial complex.   Let $N = N(X' \hookrightarrow \R^K)$ be a regular neighborhood of a simplicial embedding of $X'$ in Euclidean space.   $N$ has the homotopy type of $X'$.    Thus there is a homology equivalence $h : N \to B$ where $N$ is a compact  aspherical parallelizable manifold with boundary.   Since $h$ is a homology equivalence, the pullback of the original spherical fibration along $h$ 
$$
\pi: S(\nu) \to N
$$
is still not stably reducible. 

Let $D(\nu)$ be the mapping cylinder of $\pi$.  Then we have a pair of fibrations over $N$,
$$
\begin{CD}
(D^k,S^{k-1}) @>>> (D(\nu),S(\nu))\\
@.  @VVV \\
@. N
\end{CD}
$$
Since $(D^k,S^{k-1})$ and $(N,\partial N)$ are both Poincar\'e pairs, Theorem G of \cite{KQS}
shows that
\begin{equation*}  
\begin{CD}
S(\nu)|_{\partial N} @>>> D(\nu)|_{\partial N}  \\
@VVV @VVV\\
S(\nu) @>>>  D(\nu)
\end{CD}
\end{equation*}
is a Poincar\'e triad, and hence
$$
(X,Y) := \left(D(\nu), S(\nu) \bigcup_{  S(\nu)|_{\partial N}}  D(\nu)|_{\partial N}\right)
$$
is a Poincar\'e pair with $X$ aspherical.  Furthermore the Spivak bundle of the Poincar\'e pair is classified by the composite $D(\nu) \to N \to BSG$ where the first map is a homotopy equivalence and the second map classifies the   nonreducible spherical fibration $\pi$.    In particular the Spivak bundle of the pair $(X,Y)$ is nonreducible, and hence $(X,Y)$ does not have the homotopy type of a compact manifold pair.  
\end{proof} 

\begin{remark}
The same procedure gives other examples.
By starting with a topological bundle over a finite CW-complex with no $PL$-reduction or a $PL$-bundle over a finite CW-complex with no $O$-reduction, one can produce a compact aspherical topological manifold (with boundary) with no $PL$-structure and a compact aspherical $PL$-manifold (with boundary) with no smooth structure.
\end{remark}

\begin{remark}
We have not discussed the following existence question: 
what are the possible fundamental groups of compact aspherical manifolds?
Davis, Januszkiewicz and Weinberger
\cite{DJW} show that any finite disjoint union 
of closed aspherical (n-1)-manifolds which is the boundary of a compact manifold, 
is the boundary of an aspherical compact manifold. 
This is particularly interesting when n=4, since all 3-manifolds are boundaries.
In \cite{dh3} we show that the corresponding question for $PD_4$-pairs with aspherical 
ambient space can largely be reduced to the ``algebraic" case of $PD_4$-pairs of groups
 (with all boundary components aspherical and $\pi_1$-injective), 
provided the necessary condition $H^2(\pi;\Z\pi)=0$ holds
\end{remark}

\section{The classification of compact, aspherical manifolds with abelian fundamental group} \label{examples}

In this section we first prove Theorem \ref{duality} which gives homological conditions for Poincar\'e pairs $(X,Y)$ with $X$ aspherical and $\pi_1X$ a duality group, and then use this to classify aspherical manifolds with abelian fundamental groups.  

In principal, Theorems \ref{ET}  and  \ref{duality} and \cite{DH2} list all homotopy types of compact aspherical 4-manifolds $(M,\partial M)$ with $\pi_1M$ elementary amenable, since by \cite{DH2} such groups are duality groups.

Examples of compact aspherical manifolds with duality fundamental group are    $T^k \times D^{n-k}$, and $(T^2 - \text{int } D^2) \times S^{n-2}$, and, for nonorientable examples, $T^k \times_{\Z/2} D^{n-k}$ where $\Z/2$ acts freely on $T^k$ and reverses orientation on $D^{n-k}$.  In some sense, our result imply that aspherical manifolds which homologically resemble these also exist.

Recall the hypotheses of Theorem \ref{duality}: $(X,Y)$ is a CW-pair with $X$ aspherical, $\pi = \pi_1X$ a duality group of dimension $k$, and $Y$ a nonempty Poincar\'e complex of dimension $n-1$.   Let $D = H^k(\pi;\Z\pi)$; this is a right $\Z\pi$-module.  Let $\ol X$ and $\ol Y$ be the induced $\pi$-covers of $X$ and $Y$.

\begin{proof}[Proof of Theorem \ref{duality}]
 Notice  that $k < n$.  

Suppose first that $(X,Y)$ is a Poincar\'e pair and that $k = n-1$.    Since $X$ is aspherical and $\pi = \pi_1X$ is a  $k$-duality group, the cohomology long exact sequence of the pair $(X,Y)$ shows that $H^i(Y;\Z\pi)$ is zero if $i \not =  n-1$, and gives a short exact sequence
\begin{equation*}
0 \to H^{n-1}(X;\Z\pi)   \to H^{n-1}(Y;\Z\pi)   \to H^{n}(X,Y;\Z\pi)   \to  0.
\end{equation*}
Let $w = w_1(X,Y)$.  Since $(X,Y)$ is a Poincar\'e pair, Poincar\'e duality shows that the above short exact sequence is isomorphic to the short exact sequence 
$$
0 \to \ol{H_{1}(X,Y;\Z\pi)} \to \ol {H_{0}(Y;\Z\pi)}  \to \ol{H_{0}(X;\Z\pi)}  \to 0
$$
and that $H_i(Y;\Z\pi) = H_i\ol Y$ vanishes for $i \not = 0$.

Now suppose $(X,Y)$ is a Poincar\'e pair and that $0 \leq k \leq n-2$. 
Since $X$ is aspherical, $\pi = \pi_1X$ is a  $k$-duality group, and $(X,Y)$ is a Poincar\'e pair, the cohomology long exact sequence of the pair $(X,Y)$ shows that $H^i(Y;\Z\pi)$ is zero if $i \not = k, n-1$, and
\begin{align*}
 H^{n-1}(Y;\Z\pi) & \xrightarrow{\cong} H^n(X,Y;\Z\pi) \\
  H^{k}(X;\Z\pi) & \xrightarrow{\cong} H^k(Y;\Z\pi)
\end{align*}
as right $\Z\pi$-modules.  Let $w = w_1(X,Y)$.  Since $Y$ is a Poincar\'e complex whose orientation character factors through $\pi$,
one sees that $H_i(Y;\Z\pi)= H_i\ol Y$ is zero for $i \not = 0, n-1-k$, and $H_{n-1-k}(Y;\Z\pi) = H_{n-1-k}\ol Y \cong \ol D$.  Note also that $H_{n-1-k}\ol X = 0$ since $X$ is aspherical.

We need some homological algebra before proving the converse.   
For a ring $R$, an $R$-module $A$, and an integer $j$, an {\em Eilenberg-MacLane complex of type $(A,j)$} is a bounded below  chain complex $\cC$ of projective $R$-modules so that $H_i\cC = 0$ for $i \not = j$ and $H_j\cC \cong A$.  In this case we will write $\cC = K(A,j)$.   If $\cC$ and $\cD$ are both Eilenberg-MacLane complexes of type $(A,j)$, then $\cC$ and $\cD$ are chain homotopy equivalent (inducing the identity on $A$).

Now suppose $R$ is a ring with involution, for example $\Z\pi$ with the $w$-twisted involution $\sum a_g g \mapsto \sum a_g w(g) g^{-1}$.  Then if $P$ is a left $R$-module, so is $P^\dagger := \ol{\Hom_R(P,R)}$ via $(r\varphi)(p) := \varphi(p)\ol r$.  If $P$ is finitely generated projective, so is $P^\dagger$.   Evaluation $p\mapsto (\varphi \mapsto \ol{\varphi(p)})$ is a left module map, which is an isomorphism if $P$ is finitely generated projective.  

Suppose, as in the hypothesis of the theorem, that $(X,Y)$ is a CW pair, $X$ is aspherical and finitely dominated with fundamental group a $k$-duality group $\pi$,  that $Y$ is a nonempty Poincar\'e complex of dimension $n-1$, and $w : \pi \to \{\pm1\}$ extends the orientation character of $w_1Y$.

Since $\pi$ is a $k$-duality group, $\ol{C^{-*}(\pi;\Z\pi)} = K(\ol D, -k)$, and its conjugate dual satisfies $C_*(\pi;\Z\pi) = K(\Z,0)$.  The hypothesis 1 and 2 give $C_*(X,Y; \Z\pi) = K(\ol D, n-k)$, and hence its conjugate dual $\ol{C^{-*}(X,Y; \Z\pi)} = K(\Z,-n)$.   Theorem C then implies that $(X,Y)$ is a Poincar\'e pair.
\end{proof}

\begin{remark}
If $k \leq  n-2$, then $H_{n-1-k} \ol Y \cong \ol D$ and $H_0\ol Y = \Z$.    If  $H_0\ol Y = \Z$, $Y$ is connected and $\pi_1 Y \to \pi_1 X$ is surjective.   If $k < n-2$, then $H_1\ol Y = 0$.   Hence  $\pi_1 Y \to \pi_1 X$ has perfect kernel and there is a unique extension of $w_1Y$ to $w$.
\end{remark}

\begin{corollary} \label{manifold duality group}
Suppose $(X,Y)$ is a pair of spaces with $X$ aspherical, $Y$ a nonempty closed $(n-1)$-manifold and $\pi = \pi_1X$ a $k$-duality group.  Suppose that $n = 4$ and $\pi$ is elementary amenable or that $n > 4$ and that the FJC in $K$ and $L$-theory hold for $\pi$.   Suppose also that $w: \pi \to \{\pm1\}$ extends $w_1Y$, and 
\begin{enumerate}
 \item $H_i\ol Y $ vanishes for $i \not = 0,n-1-k$; and 
 \item  $\ker (H_{n-1-k} \ol Y  \to H_{n-1-k} \ol X) \cong \ol D$ as left $\Z\pi$-modules, where we twist by $w: \pi \to \{\pm 1\}$.  
\end{enumerate}
Then $(X,Y)$ is realizable rel $Y$.

\end{corollary}

In the case of the trivial group (which is a duality group!) the result is well-known: the boundary of a compact contractible manifold is a homology sphere and the boundary map from the set of homeomorphism classes of compact contractible $n$-manifolds to homeomorphism classes of $(n-1)$-homology spheres is bijective (see the discussion in Section 21.3.2 of \cite{BKKPR}).   The deepest result used is the result of Freedman \cite{Fr} that every homology 3-sphere bounds a contractible 4-manifold.   

Freedman-Quinn (see 11.6A in \cite{FQ}) prove an analog of this in dimension 4 for  fundamental group $\Z$: they show that the boundary map from the set of homeomorphism classes of compact aspherical 4-manifolds with infinite cyclic fundamental group to the set of homeomorphism classes of closed 3-manifolds $N$ equipped with a map $\pi_1N \to \Z$ whose infinite cyclic cover has the homology of $S^2$ is a bijection.   As far as we know,  only the cases of trivial and infinite cyclic fundamental group have been considered in the literature.

Our first lemma determines the possibilities for the homology of the boundary of a compact aspherical $n$-manifold with abelian fundamental group.   One conclusion is that the boundary has the  $\Z[\Z^k]$-homology of  $S^{n-k-1} \times T^k$, equivalently that the boundary has a $\Z^k$-cover with the homology of $S^{n-k-1}$.  The proof of the lemma is omitted.

\begin{lemma} \label{boundary of BZ^k} 
Let $M$ be a compact aspherical $n$-manifold with abelian fundamental group $\pi$ and universal cover $\wt M$.   Then 
\begin{enumerate}
\item $\pi \cong \Z^k$ for $k \leq n$.
\item If $k = n$, then $M \cong T^n$.
\item  $H_*(\partial \wt M) \cong H_*S^{n-k-1}$.
\item If $k = n-1$ and $\partial M$ has two components and $M$, $\partial_1M$, and $\partial_2M$ are all $\Z[\Z^{n-1}]$-homology $T^{n-1}$'s,
\item If $k = n-1$ and $\partial M$ has one component,  then
$$
\pi_1 \partial M \to \pi_1 M \xrightarrow{w_1M} \{\pm 1\} \to 1
$$
is exact and   $M$ is a nonorientable manifold whose double cover is a $\Z[\Z^{n-1}]$-homology $T^{n-1}$.
\end{enumerate}
\end{lemma}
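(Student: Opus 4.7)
The plan is to handle (1) and (2) via cohomological dimension, deduce (3) from Poincar\'e--Lefschetz duality with $\Z\pi$-coefficients, and then extract (4) and (5) by tracking the $\Z\pi$-module structure in the tail of the long exact sequence of $(\wt M,\partial\wt M)$.

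First, $\pi$ is torsion-free (as $M$ is aspherical) and finitely generated abelian, so $\pi\cong\Z^k$, and $k=\cd(\Z^k)\le\cd(\pi_1M)\le n$. If $k=n$, then $\Z^n$ is a $PD_n$-group, which forces $\partial M=\emptyset$ since a compact aspherical manifold with nonempty boundary has the homotopy type of a CW-complex of dimension at most $n-1$, whence $\cd\pi\le n-1$. Then $M$ is a closed aspherical $n$-manifold with $\pi_1M\cong\Z^n$, and topological rigidity of the torus (classical for $n\le 3$, Freedman--Quinn for $n=4$, Farrell--Hsiang / Farrell--Jones for $n\ge 5$) gives $M\cong T^n$.

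For (3), since $\wt M$ is contractible and $M\simeq K(\pi,1)$, we have $H^i(M;\Z\pi)=H^i(\pi;\Z\pi)=\Z$ when $i=k$ and zero otherwise. Poincar\'e--Lefschetz duality gives $H_j(\wt M,\partial\wt M)=H_j(M,\partial M;\Z\pi)\cong\ol{H^{n-j}(M;\Z\pi)}$, which is infinite cyclic for $j=n-k$ and vanishes for $0<j\ne n-k$. Combined with $H_j(\wt M)=0$ for $j>0$, the long exact sequence of the pair yields $H_j(\partial\wt M)\cong H_{j+1}(\wt M,\partial\wt M)$ for $j\ge 1$, matching $H_*(S^{n-k-1})$ in positive degrees; the tail with $H_0(\wt M)=\Z$ and $H_0(\wt M,\partial\wt M)=0$ determines $H_0(\partial\wt M)$.

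For (4) and (5), we refine the $\Z\pi$-module structure of $H_1(\wt M,\partial\wt M)$: since $\Z^{n-1}$ is an \emph{oriented} $PD_{n-1}$-group, $H^{n-1}(\pi;\Z\pi)=\Z$ carries the trivial $\pi$-action, so duality gives $H_1(M,\partial M;\Z\pi)\cong\ol\Z=\Z^w$, where $w$ is the orientation character of $M$. The tail of the long exact sequence with $\Z\pi$-coefficients becomes
\[
0\to\Z^w\to H_0(\partial M;\Z\pi)\to\Z\to 0,
\]
with $H_0(\partial M;\Z\pi)=\bigoplus_\alpha\Z[\pi/H_\alpha]$ and $H_\alpha$ the image of $\pi_1$ of the $\alpha$-th component of $\partial M$ in $\pi$. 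In case (4), two components together with $H_0(\partial\wt M)\cong\Z^2$ force each $H_\alpha=\pi$, so $\pi$ acts trivially on the middle term; a $\Z\pi$-linear injection $\Z^w\hookrightarrow\Z^2$ with trivial target action forces $w=1$, so $M$ is orientable. In case (5), the single summand $\Z[\pi/H]$ has $[\pi:H]=2$; letting $\epsilon\colon\pi\to\pi/H=\{\pm 1\}$ be the quotient, the augmentation kernel of $\Z[\pi/H]$ is the sign representation $\Z^\epsilon$, and the resulting $\Z\pi$-linear isomorphism $\Z^w\cong\Z^\epsilon$ forces $w=\epsilon$. This is precisely the claimed exactness of $\pi_1\partial M\to\pi\xrightarrow{w}\{\pm 1\}\to 1$; since $w|_H=0$ we get $\partial M$ orientable, and since $w\ne 1$ we get $M$ nonorientable. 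The main obstacle is correctly tracking the orientation twists through the Poincar\'e--Lefschetz duality isomorphism and recognizing the augmentation kernel of $\Z[\pi/H]$ as the sign representation $\Z^\epsilon$.
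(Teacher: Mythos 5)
Your proposal is correct and follows essentially the same route as the paper: torsion-freeness and finite generation for (1), closed-manifold rigidity for (2), Poincar\'e--Lefschetz duality with $\Z\pi$-coefficients together with contractibility of $\wt M$ for (3), and the $\Z\pi$-module structure of the short exact sequence $0\to H_1(M,\partial M;\Z\pi)\to H_0(\partial M;\Z\pi)\to H_0(M;\Z\pi)\to 0$ for (4) and (5). You are slightly more explicit than the paper in identifying the augmentation kernel of $\Z[\pi/H]$ as the sign representation, which is a helpful elaboration (and note the minor slip ``$w|_H=0$'' should read ``$w|_H=1$'' in multiplicative notation).
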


Next we determine the possible boundaries of compact aspherical 
$n$-manifolds with abelian fundamental group. 

\begin{corollary} (Existence) \label{existence free abelian} Let $N$ be a closed $(n-1)$-manifold equipped with a homotopy class of map $N \to T^k$ (equivalently, a homomorphism of groupoids $\pi_1N \to \Z^k$).   Suppose $k < n$.   Let $\ol N \to N$ be the corresponding $\Z^k$-cover.    Suppose 
$$
H_*\ol N \cong H_*S^{n-k-1}
$$
Then $N$ is the boundary of a compact aspherical manifold $M$ with fundamental group $\Z^k$, whose classifying map $M \to T^k$ restricts to the given one on the boundary.   
 \end{corollary}

\begin{proof}
For $n \geq 4$, this follows from Corollary \ref{manifold duality group}.

If $n =1$ and $k = 0$, then $N = S^0$ and $M = D^1$.  

If $n = 2$ and $k = 0$, then $N = S^1$ and $M = D^2$.  If $n =2$ and $k = 1$, then either $N = S^1 \amalg S^1$ or $N = S^1$ in which case $M$ is the annulus or the M\"obius strip, respectively. 

If $n = 3$ and $k = 0$, then $N = S^2$ and $M = D^3$. If $n =3$ and $k = 1$, then either $N$ is the 2-torus or the Klein bottle  in which case $M$ is $S^1 \times D^2$ or  $S^1 \times_{C_2} D^2$, respectively. 
If $n =3$ and $k = 2$, then either $N = T^2 \amalg T^2$ or $N = T^2$,
in which case $M$ is $T^2 \times I$ or $T^2 \times_{C_2} D^1= S^1 \times $ M\"obius strip, respectively. 
\end{proof}

\begin{remark} \label{rem:aspherical examples}
This remark discusses compact aspherical 4-manifolds with abelian fundamental group, focusing on connections to knot theory.  
By a $\Z[\Z^k]$-homology $S^{3-k}\times{T^k}$, we will mean a closed 
3-manifold $N$ with a $\Z^k$-cover $\ol N \to N$ so that 
$H_*\ol N \cong H_*S^{3-k}$.    
We have shown that for $k = 0,1,2,3$, such a $\Z[\Z^k]$-homology 
$S^{3-k}\times{T^k}$ bounds a compact aspherical 4-manifold with fundamental group $\Z^k$ (and conversely, the boundary of any such manifold is a $\Z[\Z^k]$-homology $S^{3-k}\times{T^k}$).

Freedman \cite{Fr} proved that a  $\Z$-homology 3-sphere bounds a compact contractible topological 4-manifold.   +1 surgery on a knot produces   
a $\Z$-homology 3-sphere.

Freedman and Quinn (see 11.6A in \cite{FQ}) showed that a \ $\Z[\Z]$-homology $S^2\times{S^1}$ bounds a compact 4-manifold having the homotopy type of a circle.  The easiest case is $S^2 \times S^1$ which bounds $D^3 \times S^1$.     Zero surgery on an Alexander polynomial 1 knot $K$ produces a  $\Z[\Z]$-homology $S^2\times{S^1}$, which must topologically bound a homology circle $W$.   One can deduce that $K$ is topologically slice by adding a 2-handle to $W$ along the meridian of $K$.

Our theorem shows that a  $\Z[\Z^2]$-homology $S^1\times{T^2}$
bounds a  compact 4-manifold having the homotopy type of a 2-torus.   
To the best of our knowledge, this is a new result.   
The easiest example of a compact aspherical 4-manifold with fundamental group $\Z^2$  is given by $D^2 \times T^2$, with boundary $T^3$.  More generally, rank two examples are given by  $D^2$-bundles over $T^2$ (classified by their Euler number) with boundary a nilmanifold.    These examples can be viewed as the 4-manifold presented by the Kirby diagram given by the Borromean rings with two dotted components (corresponding to two 1-handles added to $D^4$) and  integral Dehn surgery on the remaining component (corresponding to adding a 2-handle).    These examples can be generalized by  tying an Alexander 1 knot $K$ in a component of the Borromean rings and doing integral surgery on that component.  (By tying a knot in a component we mean replacing a $D^3$ which intersects the rings in an unknotted arc by another $D^3$ with a knotted arc  so that one of the components of the resulting link is the knot $K$.)    The 3-manifold boundary then satisfies the conditions of our theorem and hence bounds a compact 4-manifold which has the homotopy type of a 2-torus.

Here is another way of constructing rank 2 examples and simultaneously recovering the main result of \cite{D} that a 2-component link $L$ with Alexander polynomial 1 is topologically concordant to the Hopf link.  Let $M_L$ be the exterior of $L$ and $M_H$ be the exterior of the Hopf link.   Let $N = M_L \cup_{T^2 \amalg T^2} M_H$ where meridians and longitudes are identified.  Then $N$ is a $\Z[\Z^2]$-homology $S^1\times{T^2}$, which must, by our theorem, bound a compact aspherical 4-manifold $X$ with fundamental group $\Z^2$.   Adjoining a product of a neighborhood of the link with the interval, one arrives at the desired concordance
$$
X \bigcup_{(T^2~ \amalg ~ T^2) \times I} (S^1 \times D^2 \times I ~ \amalg ~ S^1 \times D^2 \times I)
\cong S^3 \times I$$

An orientable rank 3 example is given by $T^3 \times I$.   More generally, by a $\Z[\Z^3]$-homology $T^3$ we mean a closed, connected  3-manifold $N$ equipped with $\Z^3$-cover $\ol N \to N$ so that $\wt H_*(\ol N) =0$.   Then if $N_0$ and $N_1$ are a pair of $\Z[\Z^3]$-homology $T^3$'s, then $N_0 \sqcup N_1$ bounds a compact aspherical manifold with fundamental group $\Z^3$.

Examples are given by tying Alexander polynomial 1 knots in each of the components of the Borromean rings and doing zero Dehn surgery on each component to produce a $\Z[\Z^3]$-homology $T^3$ called $N_L$, and then $N_L \sqcup T^3$ then bounds a compact 4-manifold have the homotopy type of a 3-torus.

Note that if $L$ is a three component link with vanishing pairwise linking numbers, and $N_L$ is the three manifold obtain by doing 0-surgery on each component, then $N_L$ is a $\Z[\Z^3]$-homology $T^3$.
 Theorem 1.3 of Cha-Powell \cite{CP}  uses topological surgery to show that if $L$ is a three component link with vanishing pairwise linking numbers and trivial Arf invariants, then $N_L \coprod T^3$ bounds a compact aspherical manifold with fundamental group $\Z^3$.   Our result generalizes theirs by removing the hypothesis of trivial Arf invariant.    In fact if $L$ and $L'$ are both three component links with vanishing pairwise linking numbers, $N_L \coprod N_{L'}$ bounds a compact aspherical manifold with fundamental group $\Z^3$.    

If $N_L$ admits a orientation reversing free involution (which holds in the case of the Borromean rings), then our result shows that $N_L$ bounds a compact, nonorientable aspherical manifold with fundamental group $\Z^3$.   
\end{remark}

We have determined the possible boundaries of compact aspherical manifolds with abelian fundamental group.   We now ask whether the homeomorphism class of the boundary determines the homeomorphism class of the manifold.   In theory, this is an application of obstruction theory together with the Borel Conjecture.

\begin{theorem} (Uniqueness) \label{uniqueness free abelian} Suppose $k < n-2$. Two  compact aspherical $n$-manifolds with  fundamental group isomorphic to $\Z^k$   are homeomorphic if and only if their boundaries are homeomorphic.  In fact, any homeomorphism between the boundaries extends to a homeomorphism of the manifolds.
\end{theorem}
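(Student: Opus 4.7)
The plan is to apply Proposition~\ref{BUC_ot} after invoking Theorem~\ref{BUC}, which yields the Borel Uniqueness Conjecture for $M$: since $\pi\cong\Z^k$ is finitely generated free abelian, it satisfies the Farrell--Jones Conjecture in $K$- and $L$-theory, so part~(3) of Theorem~\ref{BUC} applies when $n\geq 5$, while parts~(1) and~(2) cover the low-dimensional cases. Thus, given a homeomorphism $h:\partial W\xrightarrow{\cong}\partial M$, it suffices to produce an isomorphism $\varphi:\pi_1W\to\pi_1M$ with $\varphi\circ\pi_1(\partial W\hookrightarrow W)\simeq\pi_1(\partial M\hookrightarrow M)\circ\pi_1(h)$; since the target $\pi_1M\cong\Z^k$ is abelian, the relation $\simeq$ collapses to literal equality.

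Next I would show that the inclusion-induced map $\pi_1(\partial M)\to\pi_1M$ is canonically determined by $\partial M$ alone. Lemma~\ref{boundary of BZ^k}(3) gives $H_0(\partial\wt{M})\cong\Z$, which forces $\partial M$ to be connected and the $\Z^k$-cover $\partial\wt{M}\to\partial M$ to be connected, so $\pi_1(\partial M)\to\pi_1M$ is surjective. The classifying map $\partial M\to T^k$ of this cover has homotopy fiber $\partial\wt{M}$, and I would analyze its Serre spectral sequence
\[
E^2_{p,q}=H_p(T^k;H_q(\partial\wt{M}))\Rightarrow H_{p+q}(\partial M).
\]
The hypothesis $k<n-2$ gives $n-k-1\geq 2$, so by Lemma~\ref{boundary of BZ^k}(3) the $q=0$ row carries trivial coefficients $\Z$ while $H_1(\partial\wt{M})=0$. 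For degree reasons no differential touches $E^r_{1,0}$ or $E^r_{0,1}$, so $H_1(\partial M)\cong E^\infty_{1,0}=H_1(T^k)\cong\Z^k$, and the edge homomorphism realizing this isomorphism is precisely the abelianization of $\pi_1(\partial M)\to\pi_1M$. The same identification applies to $W$.

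Given these intrinsic identifications, I would define $\varphi$ as the composite
\[
\pi_1W \xleftarrow{\;\cong\;} H_1(\partial W) \xrightarrow{\;h_*\;} H_1(\partial M) \xrightarrow{\;\cong\;} \pi_1M,
\]
where the outer arrows come from the previous paragraph. Naturality of the Hurewicz map applied to $h$ then yields $\varphi\circ\pi_1(\partial W\hookrightarrow W)=\pi_1(\partial M\hookrightarrow M)\circ\pi_1(h)$ as homomorphisms $\pi_1(\partial W)\to\pi_1M$, so Proposition~\ref{BUC_ot} delivers a homeomorphism $W\xrightarrow{\cong}M$ extending $h$. I expect the main obstacle to be the spectral sequence identification of $H_1(\partial M)$ with $\pi_1M$; this is where the hypothesis $k<n-2$ enters decisively, since for $k\in\{n-2,n-1\}$ one may have either $H_1(\partial\wt{M})\neq 0$ or $\partial M$ disconnected, and the clean intrinsic identification of the inclusion on $\pi_1$ by $H_1(\partial M)$ no longer holds.
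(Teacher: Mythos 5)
Your proposal is correct and reaches the same endpoint as the paper (exhibiting $\varphi$ and invoking Proposition~\ref{BUC_ot}), but the technical implementation differs. The paper shows that $H_*(M,\partial M;\Z\pi)$ vanishes in degrees $<n-k$ (Lemma~\ref{boundary of BZ^k}), passes to $H_*(M,\partial M;\Z)=0$ in degrees $<n-k>2$ via the universal coefficient spectral sequence, and then reads off from the long exact sequence of the pair that $H_1(\partial M)\to H_1(M)$ is an isomorphism; the same is done for $W$, and $\varphi$ is forced. You instead run the Serre (equivalently Cartan--Leray) spectral sequence of $\partial\wt M\to\partial M\to T^k$ to compute $H_1(\partial M)\cong\Z^k$ and show the edge homomorphism to $H_1(T^k)$ is an isomorphism; this identifies the same composite. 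Both arguments use the same input from Lemma~\ref{boundary of BZ^k}(3) and hinge on $n-k-1\geq 2$ in the same way; the paper's route gives vanishing of all relative $H_i$ for $i<n-k$ (slightly more information, and reusable in Theorem~\ref{summary}(3)), while yours computes only what is needed and makes the identification of the edge map with the abelianized inclusion explicit. One small care point you correctly implicitly handle: for $n\leq 3$ the hypothesis $k<n-2$ forces $k=0$ and $n=3$, where $M$ is contractible and hence orientable, so Theorem~\ref{BUC}(1) applies; the paper instead cites the abelian-group discussion directly.
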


\begin{proof}
Let $W$ and $M$ be such manifolds and let $h : \partial W \to \partial M$ be a homeomorphism.   Since the Borel Conjecture holds for $M$ (see \cite{BFL}) and the fundamental groups of $W$ and $M$ are abelian, Proposition \ref{BC_ot} implies that the homeomorphism $h$ extends to a homeomorphism $M \to W$, if and only if there is an isomorphism $\varphi$ making the following diagram commute 
$$
\begin{tikzcd}
H_1(\partial W) \arrow[r,"h_*"]  \arrow[d] &  H_1(\partial M)  \arrow[d] \\
H_1(W) \arrow[r,dashed,"\varphi"] &  H_1 (M).
\end{tikzcd}
$$

To show that $\varphi$ exists, it suffices to show that the vertical maps in the above square are isomorphisms.     Since the arguments are parallel, we just consider the case of $H_1 (\partial M) \to H_1(M)$.   By Lemma \ref{boundary of BZ^k}, the relative homology  $H_*(M,\partial M ;\Z \pi)$ vanishes in degrees less than $n-k$.   By the Universal Coefficient Spectral Sequence (or by a direct argument), the  the relative homology  $H_*(M,\partial M ;\Z)$ vanishes in degrees less than $n-k$, which is greater than 2 by hypothesis.   Thus the vertical maps are isomorphisms. 
\end{proof}

Since the focus of our sequel paper \cite{DH2} is 4-manifolds, we state a corollary of Lemma \ref{boundary of BZ^k}, Corollary \ref{existence free abelian}, and Theorem \ref{uniqueness free abelian}.
Let $\cM^3$ be the set of homeomorphism classes of closed 3-manifolds.    For a fixed group $G$, let $\cM^4_G$ be the set of homeomorphism classes of compact aspherical 4-manifolds with fundamental group isomorphic to $G$.

The following corollary is a consequence of Lemma \ref{boundary of BZ^k} and Corollary \ref{existence free abelian}.

\begin{corollary} \label{boundaries_abelian}
For $G = 1, \Z, \Z^2, \Z^3, \Z^4$, let
$
\partial: \cM^4_G \to \cM^3
$
be the boundary map.
\begin{enumerate}
\item  $\partial(\cM^4_1) = \{ [N]  \mid H_* N \cong H_*S^3 \}$.
\item  $\partial(\cM^4_\Z) = \{ [N]  \mid \exists \Z\text{-cover } \ol N \to N \text{with }  H_*\ol N \cong H_*S^2\}$.
\item  $\partial(\cM^4_{\Z^2}) = \{ [N]  \mid \exists \Z^2\text{-cover } \ol N \to N \text{with }  H_*\ol N \cong H_*S^1\}$.
\item $\partial(\cM^4_{\Z^3}) = \{ [N]  \mid \exists \Z^3\text{-cover } \ol N \to N \text{with }  H_*\ol N \cong H_*S^0\}$.
\item $\partial(\cM^4_{\Z^4}) = \emptyset$.
\end{enumerate}
\end{corollary}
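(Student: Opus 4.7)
The plan is to verify the five cases by pairing Lemma \ref{boundary of BZ^k} (for the ``$\subseteq$'' direction) with Theorem \ref{existence free abelian} (for the ``$\supseteq$'' direction), together with Lemma \ref{boundary of BZ^k}(2) for the exceptional case $k = n = 4$.

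For the forward inclusion in parts (1)--(4), fix $k \in \{0,1,2,3\}$ and let $[M] \in \cM^4_{\Z^k}$ with $N = \partial M$. Let $\wt M \to M$ be the universal cover and let $\ol N \subseteq \wt M$ be the preimage of $N$. Because $\pi = \Z^k$ acts freely on $\wt M$, the restricted cover $\ol N \to N$ is a regular $\Z^k$-cover. Lemma \ref{boundary of BZ^k}(3) (applied with $n = 4$) then yields
\[
H_*(\ol N) = H_*(\partial \wt M) \cong H_* S^{3-k},
\]
so $[N]$ satisfies the condition on the right side of part ($k+1$).

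For the reverse inclusion in parts (1)--(4), let $N$ be a closed $3$-manifold admitting a $\Z^k$-cover $\ol N \to N$ with $H_*\ol N \cong H_*S^{3-k}$. Such a cover is classified by a homomorphism $\pi_1 N \to \Z^k$, equivalently a homotopy class of map $N \to T^k$. Since $k < 4 = n$, Theorem \ref{existence free abelian} applies and produces a compact aspherical $4$-manifold $M$ with $\pi_1 M \cong \Z^k$ and $\partial M \cong N$, so $[N] \in \partial(\cM^4_{\Z^k})$.

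Finally, for part (5), we are in the case $k = n = 4$. By Lemma \ref{boundary of BZ^k}(2), any compact aspherical $4$-manifold with fundamental group $\Z^4$ is homeomorphic to $T^4$, which is closed. Thus no element of $\cM^4_{\Z^4}$ has nonempty boundary, giving $\partial(\cM^4_{\Z^4}) = \emptyset$.

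There is no real obstacle: the corollary is essentially the specialization of Lemma \ref{boundary of BZ^k} and Theorem \ref{existence free abelian} to dimension $n = 4$. The only small point to be careful about is that, in the forward direction, the cover we exhibit on $\partial M$ is the pullback of the universal cover of $M$, which need not be connected when $N$ has several components; this is fine because a ``$\Z^k$-cover'' merely means a regular covering with deck group $\Z^k$, and the homology computation in Lemma \ref{boundary of BZ^k}(3) is stated for the (possibly disconnected) space $\partial \wt M$.
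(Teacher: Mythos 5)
Your proof is correct and follows exactly the route the paper indicates: the paper states only that the corollary ``is a consequence of Lemma \ref{boundary of BZ^k} and Theorem \ref{existence free abelian},'' and you have filled in precisely those two directions (plus the $k=n$ case via Lemma \ref{boundary of BZ^k}(2)), including the worthwhile observation that the relevant $\Z^k$-cover of the boundary may be disconnected.
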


\begin{remark}
(1) is originally due to Freedman, \cite[Theorem 1.4']{Fr}.  As mentioned above, case (2) is due to Freedman-Quinn \cite[11.6A]{FQ}, where they also prove uniqueness.   Case (5) is a standard consequence of the Farrell-Jones Conjecture for $\Z^4$ and the fact that $\Z^4$ is a good group.   To the best of our knowledge, cases (3) and (4) are new.
\end{remark}

\begin{remark}
 Case (4) is a bit subtle.    There are two subcases: case (4a) where $N$ is disconnected and case (4b)  where $N$ is connected.   An example of case (4a) is the boundary of $T^3 \times I$.   An example of case (4b) is the boundary of $T^3 \times_{C_2} [-1,1]$ where $C_2$ acts freely on $T^3$ with quotient homeomorphic to $T^3$ and $C_2$ acts on $[-1,1]$ by multiplication by $-1$.   
 
Here is more on case (4a).   Assume $N$ is disconnected.   Then the existence of a $\Z^3$-cover $\ol N \to N$ with $H_*(\ol N) \cong H_*S^0$ is equivalent to $N = N_0 \sqcup N_1$ and where there exist $\Z^3$-covers $\ol N_i \to N_i$ with $\wt H_*(\ol N_i) = 0$  for $i = 0,1$.

Here is more on case (4b).   Assume $N$ is connected.   Then the existence of a $\Z^3$-cover $\ol N \to N$ with $H_*(\ol N) \cong H_*S^0$ is equivalent to the existence of a $\Z^3$-cover $\ol P \to N$ with $\wt H_*(\ol P) = 0$.

Furthermore, if $W$ is a compact aspherical 4-manifold with fundamental group $\Z^3$, then $W$ is orientable in case 4(a) and nonorientable in case (4b).   The orientation double cover in case (4b) is a manifold of type (4a).
\end{remark}

An obvious question is whether the boundary map in Corollary \ref{boundaries_abelian} is injective, in other words, does the homeomorphism type of boundary determine the homeomorphism type of the coboundary?
We saw in Theorem \ref{uniqueness free abelian}  that the answer is yes if $G = 1, \Z$.

The theorem below gives, given a closed 3-manifold $N$, an analysis of the homeomorphism classes of compact aspherical 4-manifolds with abelian fundamental group whose boundary is $N$.

\begin{theorem} \label{summary}
Let $W$ and $M$ be compact, aspherical n-manifolds with abelian fundamental group.   Let $h : \partial W \to \partial M$ be a homeomorphism.
\begin{enumerate}
\item $\pi_1W \cong \pi_1M \cong \Z^k$ for some $k \leq n$.
 \item  If there is an isomorphism $\varphi$ making the following diagram commute
\[
\begin{tikzcd}
H_1\partial W \arrow[r,"h_*"]  \arrow[d] &  H_1\partial M  \arrow[d] \\
H_1W \arrow[r,dashed,"\varphi"] &  H_1 M,
\end{tikzcd}
\]
then $h$ extends to a homeomorphism $H : W \to M$ with $H_*(H) = \varphi$.
 \item If $k < n-2$, then $H_1\partial W \to H_1W$  and $H_1\partial M \to H_1M$ are isomorphisms. 
Hence $h$ extends to a homeomorphism $H : W \to M$.
\item If $k = n-2$, then $h$ extends to a homeomorphism $H : W \to M$ if and only if 
\[
h_*(\ker (H_1\partial W \to H_1W) )= \ker (H_1\partial M \to H_1M).
\]
\item If $k = n-1$ and $\partial M$ is disconnected, then $\partial M$ and $\partial W$ have two components.    Let $h = h_0 \sqcup h_1 : \partial_0 W \sqcup \partial_1 W \xrightarrow{\cong} \partial_0 M \sqcup \partial_1 M$.   Then $H_1\partial_i M  \xrightarrow{\cong}  H_1M$ and $H_1\partial_i W  \xrightarrow{\cong}  H_1W$ for $i = 0,1$.
Furthermore, $h$ extends to a homeomorphism $H : W \to M$ if and only if if the composite 
\begin{multline*}
 H_1 M \xleftarrow{\cong}  H_1\partial_1 M \xleftarrow{\cong}  H_1\partial_1 W \xrightarrow{\cong} H_1 W \\ \xleftarrow{\cong}  H_1\partial_0 W \xrightarrow{\cong}  H_1\partial_0 M  \xrightarrow{\cong} H_1 M 
\end{multline*}
is the identity.
\item If $k = n-1$ and $\partial M$ is connected, then $H_1\partial M \to  H_1M$ and $H_1\partial W \to H_1W$ are both injective with image a subgroup of index 2.   Let $e(H_1\partial M \to  H_1M) \in H_1\partial M /2 H_1\partial M $ and  $e(H_1\partial M) \to  H_1M) \in H_1\partial W /2 H_1\partial W $ be the corresponding extension classes.  Then $h$ extends to a homeomorphism  $H : W \to M$ if and only if $h_*(e(H_1\partial M \to  H_1M)) = e(H_1\partial W \to  H_1W)$.
\item If $k = n$, then any isomorphism $H_1W \xrightarrow{\cong} H_1M$ is realized by a homeomorphism $H : W \to M$.
\end{enumerate}

\begin{remark}\label{surjective remark}
Given abelian groups $A$, $A'$, $B$, and $B'$, an isomorphism $\alpha$, and epimorphisms $\beta$ and $\gamma$ as pictured below, there is an isomorphism $\delta$ making the square commute if and only if $\alpha(\ker \beta) = \ker(\gamma)$.
$$
\begin{tikzcd}
A \arrow[r,"\alpha"]  \arrow[d,"\beta"] &  B  \arrow[d,"\gamma"] \\
A' \arrow[r,dashed,"\delta"] &  B'.
\end{tikzcd}
$$
\end{remark}

\begin{proof}
\noindent (1)   This follows from Lemma \ref{boundary of BZ^k}, parts (1) and (3).\\
\noindent (2)   The Borel Conjecture for $M$ holds since the fundamental group is abelian. (This is classical for $n \leq 2$, follows from Perelman and geometrization for $n=3$, and for $n \geq 4$, see \cite{BFL}).
The conclusion then follows from Proposition  \ref{BC_ot} and the fact that the fundamental group of $M$ is abelian.\\
\noindent (3)      Since the arguments are parallel, we will just show that $H_1 \partial M \to H_1M$ is an isomorphism.   By Lemma \ref{boundary of BZ^k}, the relative homology  \\$H_*(M,\partial M ;\Z \pi_1M)$ vanishes in degrees less than $n-k$.   By the Universal Coefficient Spectral Sequence (or by a direct argument),   the relative homology  $H_*(M,\partial M ;\Z)$ vanishes in degrees less than $n-k$, and hence in degrees 1 and 2.   Thus the  map is an isomorphism. 

\noindent (4)   When $k = n-2$, $H_*(\partial \wt M)\cong H_*S^1$ by Lemma \ref{boundary of BZ^k}.    Thus  $\pi_1\partial M \to \pi_1M$ is surjective since $\partial \wt M$ is path-connected.   Hence  $H_1\partial M \to H_1M$ is surjective with infinite cyclic kernel.  Likewise $H_1\partial W \to H_1W$ is surjective.   The conclusion follows from Remark \ref{surjective remark} and part (2).

\noindent (5)  If $k = n-1$, Lemma \ref{boundary of BZ^k} implies that $H_*\partial \wt M \cong H_*S^0$.  If $\partial M$ is disconnected, then  $\partial M$ and $\partial W$ have two components, so that $h = h_0 \sqcup h_1 : \partial_0 W \sqcup \partial_1 W \xrightarrow{\cong} \partial_0 M \sqcup \partial_1 M$.   Furthermore $H_1\partial_i \wt M \to H_1\wt M$ and $H_1\partial_i \wt W \to H_1\wt W$ are isomorphisms.   Hence $H_1\partial_i M \to H_1M$ and $H_1\partial_i  W \to H_1W$ are isomorphisms. 

 The conclusion follows from part (2).

\noindent (6)
This follows from the correspondence between $\Ext^1$ and extensions, but can be done explicitly without reference to $\Ext^1$.   We will be brief.    $\Ext^1(C_2,A) \cong A/2A$ for an abelian group $A$.  Given a short exact sequence 
$$
0 \to A \xrightarrow{\beta} A' \xrightarrow{\pi} C_2 \to 0,
$$
 the extension class $e(A \to A')  \in A/2A$ is given by $a + 2A$,  where there exists an $a' \in A'$ so that $i(a) = 2a'$ and $\pi(a')$ is nontrivial, and the extension class is independent of the choice of $a'$.     Then given a  square of abelian groups, 
$$
\begin{tikzcd}
A \arrow[r,"\alpha"]  \arrow[d,"\beta"] &  B  \arrow[d,"\gamma"] \\
A' \arrow[r,dashed,"\delta"] &  B'.
\end{tikzcd}
$$
with $\alpha$ an isomorphism and $\beta$ and $\gamma$ injections with index 2 images, then there is an isomorphism $\delta$ making the diagram commute if and only if $\alpha(e(A \to A')) = e(B \to B')$.   The conclusion then follows from part (2).

\noindent (7)    When $k = n$, the boundaries are empty and the fundamental groups are isomorphic, so we can take any isomorphism $\varphi$ in part (2).
\end{proof}

\begin{example} \label{nonhomeomorphic}
In cases (4), (5), and (6) the homeomorphism type of the cobounding compact aspherical manifold is not determined by  the homeomorphism type of  the boundary alone.   We shall sketch an example in case (4).  More details and similar examples for the remaining cases, using the same homology 3-torus, are given in \cite{DH2}.

Let $K_1, K_2$, and  $K_3$ be  distinct hyperbolic Alexander one knots, with exteriors $X_1$, $X_2$, and $X_3$.   The closed 3-manifold $N_L$ obtained by attaching the knot exteriors to the exterior  of the Borromean rings Bo (via gluing maps that identify meridians and longitudes of the knots with the meridians and longitudes of the components of $Bo$) is a homology 3-torus: $H_1N_L \cong \Z^3$ with a basis given by the images of the meridians; in fact it is a $\Z[\Z^3]$-homology 3-torus: $H_*(N_L;\Z[\Z^3]) \cong H_*(T^3;\Z[\Z^3])$.  Corollary \ref{existence free abelian} shows that there are compact aspherical manifolds $M_1, M_2$, and $M_3$ with fundamental group free abelian of rank 2, each with boundary $N_L$ so that $H_1N_L \to H_1M_i$ is an epimorphism whose kernel is generated by the $i$-th meridian.   

 The homeomorphism group  $\text{Homeo}(N_L)$ must preserve the JSJ decomposition $N_L = X(Bo) \cup X_1 \cup X_2 \cup X_3$, and so must preserve the meridianal basis of $H_1N_L$ (up to sign), since the hyperbolic pieces of the decomposition are pairwise nonhomeomorphic.   In particular, the three epimorphisms of $H_1N_L$ onto $\Z^2$ with kernel generated by a meridian are pairwise inequivalent under the action of $\text{Homeo}(N_L)$.  Then Theorem \ref{summary}(4) shows that the manifolds $M_1$, $M_2$, and $M_3$ are pairwise nonhomeomorphic.

Of course  in simple cases  the homeomorphism type of the boundary determines the homomorphism type of a cobounding compact aspherical 4-manifold with abelian fundamental group.   For example, we know that $T^3$ bounds a unique compact aspherical 4-manifold with fundamental group $\Z^2$ and as well as a unique compact aspherical 4-manifold with fundamental group $\Z^3$.   And $T^3 \coprod T^3$ bounds a unique compact aspherical 4-manifold with fundamental group $\Z^3$.

\end{example}

\end{theorem}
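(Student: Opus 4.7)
The plan is to reduce every case to Proposition~\ref{BUC_ot} combined with the fact that for abelian $\pi$, the group-theoretic commutativity condition (up to conjugation) degenerates to an honest commutative square, which on $H_1 = \pi$ is precisely the square in part (2). The prerequisites are: the BUC for $M$, which holds classically for $n\leq 3$ and by \cite{BFL} for $\pi = \Z^k$ in general; and the homological computations of $\partial\widetilde M$ from Lemma~\ref{boundary of BZ^k}. With these in hand, part (1) is immediate from Lemma~\ref{boundary of BZ^k}(1), and part (2) is just Proposition~\ref{BUC_ot} translated into $H_1 = \pi_1$ language.

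For parts (3) and (4) the work is to analyze the map $H_1\partial M \to H_1 M$. From Lemma~\ref{boundary of BZ^k}(3), $H_i(\partial\widetilde M) \cong H_i(S^{n-k-1})$, so by the long exact sequence $H_i(M,\partial M;\Z\pi)$ vanishes for $0 \leq i \leq n-k-1$. Feeding this into the universal coefficient spectral sequence (or simply using the trivial-coefficient LES and that $M$ is aspherical with $\pi$ abelian) yields vanishing of $H_i(M,\partial M;\Z)$ in low degrees. When $k < n-2$ this kills both $H_1$ and $H_2$, so the vertical maps in (2) are isomorphisms, proving (3). When $k = n-2$, $\partial\widetilde M$ has the homology of $S^1$, so the vertical maps are epimorphisms with infinite cyclic kernels; part (4) then follows from Remark~\ref{surjective remark} applied to (2).

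For (5) and (6) the starting point is Lemma~\ref{boundary of BZ^k}(4)--(5), which gives $H_0(\partial\widetilde M) \cong \Z^2$. In the disconnected subcase (5), this forces both boundaries to split as two components, each with $H_1$ mapping isomorphically into $H_1 M$; so a homeomorphism $\varphi$ completing the $H_1$-square exists iff one can assemble $h_{0*}$ and $h_{1*}$ into a single map $H_1 W \to H_1 M$ compatible with these two isomorphisms, which is exactly the composed identity condition stated. In the connected subcase (6), the inclusion of the boundary gives an index-$2$ embedding $H_1\partial M \hookrightarrow H_1 M$ (Lemma~\ref{boundary of BZ^k}(5)), which is classified by an element of $\Ext^1(C_2,H_1\partial M) \cong H_1\partial M / 2H_1\partial M$. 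Existence of the dotted $\varphi$ in (2) is then the standard fact that two index-$2$ extensions are isomorphic via $h_*$ iff their Ext-classes correspond under $h_*$. Finally, part (7) is trivial: Lemma~\ref{boundary of BZ^k}(2) gives $M \cong T^n$, both boundaries are empty, and part (2) applies with any isomorphism.

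The main technical point I anticipate is part (6): one has to be careful that the extension classes $e(H_1\partial M \to H_1 M)$ and $e(H_1\partial W \to H_1 W)$ are computed with respect to compatible reductions modulo $2$, so that the statement $h_*(e) = e'$ makes literal sense in $H_1\partial W / 2 H_1\partial W \to H_1\partial M / 2 H_1\partial M$, and that an abstract isomorphism of index-$2$ extensions indeed produces the $\varphi$ required by Proposition~\ref{BUC_ot}. The rest reduces to the homological vanishing bookkeeping already set up in Lemma~\ref{boundary of BZ^k}.
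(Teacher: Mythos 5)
Your proposal follows the paper's route essentially point for point: Lemma~\ref{boundary of BZ^k} for the homological input, Proposition~\ref{BUC_ot} plus the abelian-$\pi_1$ simplification for the reduction to a commutative $H_1$-square, the universal coefficient spectral sequence for the $k < n-2$ vanishing, Remark~\ref{surjective remark} for the $k=n-2$ case, and the $\Ext^1(C_2,-)\cong(-)/2$ correspondence for the connected $k=n-1$ case. The one small discrepancy is in part (1): ``immediate from Lemma~\ref{boundary of BZ^k}(1)'' only gives that each of $\pi_1W$ and $\pi_1M$ is free abelian of rank at most $n$; to get that the two ranks agree (which is what the statement asserts) one must also invoke the homeomorphism $h$ together with part (3) of that lemma, which pins down $n-k-1$ as the degree of the ``sphere'' homology of a cover of the common boundary. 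The paper cites Lemma~\ref{boundary of BZ^k}(1) \emph{and} (3) for exactly this reason; you should do so as well.
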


\bibliography{dh}{}
\bibliographystyle{alpha}

\end{document}